\theoremstyle{plain}
    \newtheorem{thm}{Theorem}[section]
    \newtheorem{corollary}[thm]{Corollary}
    \newtheorem{lemma}[thm]{Lemma}
    \newtheorem{proposition}[thm]{Proposition}
    \newtheorem{theorem}[thm]{Theorem}
\theoremstyle{definition}
    \newtheorem{definition}[thm]{Definition}
    \newtheorem{remark}[thm]{Remark}
\theoremstyle{remark}
\theoremstyle{question}
    \newtheorem{setup}[thm]{}
\newcommand{\C}{\mathbb{C}}
\newcommand{\PP}{\mathbb{P}}
\newcommand{\Z}{\mathbb{Z}}
\newcommand{\OO}{\mathcal{O}}
\newcommand{\Aut}{\operatorname{Aut}}
\newcommand{\Ker}{\operatorname{Ker}}
\newcommand{\Pic}{\operatorname{Pic}}
\newcommand{\Sing}{\operatorname{Sing}}
\begin{document}

\title[Pseudo-automorphisms of positive entropy]{
Pseudo-automorphisms of positive entropy on the blowups of products of projective spaces
}

\author{Fabio Perroni}
\address
{
\textsc{Department of Mathematics, University of Bayreuth, Germany}\endgraf 
Current: 
\textsc{Scuola Internazionale Superiore di Studi Avanzati
(SISSA)}\endgraf
\textsc{Via Bonomea 265,
34136 Trieste, Italy}}
\email{fabio.perroni@sissa.it}

\author{De-Qi Zhang}
\address
{
\textsc{Department of Mathematics,
National University of Singapore}\endgraf
\textsc{
10 Lower Kent Ridge Road,
Singapore 119076,
Singapore}}
\email{matzdq@nus.edu.sg}

\begin{abstract}
We use a concise method to construct pseudo-automorphisms $f_n$ of the first dynamical degree
$d_1(f_n) > 1$
on the blowups of the projective $n$-space for all $n \ge 2$ and more generally on
the blowups of products of projective spaces.
These $f_n$, for $n=3$ have positive entropy, and for $n\geq 4$
seem to be the first examples of pseudo-automorphisms with $d_1(f_n) > 1$
(and of non-product type)
on rational varieties of higher dimensions.
\end{abstract}

\subjclass[2000]{
32H50, 
14J50, 
32M05, 
37B40 
}
\keywords{automorphism, iteration, complex dynamics, topological entropy}


\maketitle

\section{Introduction}\label{Intro}

We work over the field $\C$ of complex numbers.

A birational map $f: X \dashrightarrow X'$ of varieties
is a {\it pseudo-isomorphism} if it is an isomorphism outside
codimension-two closed subsets of $X$ and $X'$. If we assume further $X = X'$, then $f$ is called a
{\it pseudo-automorphism}.
By the minimal model program (which we will not use at all), a variety of dimension $\ge 3$ may have more than one
minimal models, but all of them are pseudo-isomorphic to each other.
In dimension two, every pseudo-automorphism of a normal projective surface is an automorphism, and all the minimal models
of a given surface are isomorphic to each other.

The main result of the paper is the following:

\begin{theorem}\label{ThA}
Let $w = w_{p, q, r}$ be the Coxeter element $($unique up to conjugation$)$
of the Weyl group $W(T_{p, q, r})$ {\rm (cf. \ref{Weyl})}.
Suppose that $r \ge 3$ and $\frac{1}{p}+\frac{1}{q}+\frac{1}{r}<1$.
Then
there exist a blowup
$$X = X_{p, q, r} \,\, \to \,\, (\PP^{r-1})^{p-1} = \PP^{r-1} \times \cdots \times \PP^{r-1}$$
at $q+r$ points $P_i$ lying on a  cuspidal curve $C \subset (\PP^{r-1})^{p-1}$ of multi-degree $r$
and a pseudo-automorphism
$f_w : X \,\, \dashrightarrow \,\, X$
such that $(f_w)^* | H^2(X, \Z)$ is equal to $w$.

In particular, the first dynamical degree $d_1(f_w)$
of $f_w$ is equal to the spectral radius $\rho(w)$ of $w$ and larger than $1$
$($cf. \cite{DS}$)$.
\end{theorem}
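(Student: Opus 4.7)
The plan is to construct $f_w$ as an explicit composition of birational maps realizing the simple reflections of $W(T_{p,q,r})$, arranged so that the composition in a cyclic order gives the Coxeter element $w$, and then to choose the $q+r$ points $P_i$ on the cuspidal curve $C$ so that this composition is in fact a pseudo-automorphism of the blowup $X$.

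The first step is a lattice identification. Let $H_1, \ldots, H_{p-1}$ be the pullbacks to $X$ of the hyperplane classes from the $p-1$ factors of $(\PP^{r-1})^{p-1}$, and let $E_1, \ldots, E_{q+r}$ be the exceptional divisors. I would identify the $T_{p,q,r}$ root lattice inside $\Pic(X)$ by declaring an arm of length $q-1$ to consist of consecutive differences $E_i - E_{i+1}$, an arm of length $r-1$ to consist of analogous differences among $E_{q+1}, \ldots, E_{q+r}$, a central ``Cremona-type'' root built from the $H_j$ and the first $r$ of the $E_i$, and the short arm of length $p-1$ made of reflections permuting adjacent factors of $(\PP^{r-1})^{p-1}$. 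A direct intersection computation checks that the Gram matrix of these roots coincides with that of $T_{p,q,r}$.

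Next I would realize each simple reflection by a concrete birational self-map of $X$ that preserves $C$ set-theoretically. Reflections of type $E_i - E_{i+1}$ come from automorphisms of $(\PP^{r-1})^{p-1}$ swapping two of the $P_i$ along $C$, available because $C$ is rational and smooth at each $P_i$; reflections on the arm of length $p-1$ come from factor-permutations of $(\PP^{r-1})^{p-1}$; and the central reflection is realized by a higher-dimensional Cremona-type involution with base locus supported on a prescribed subset of the $P_i$, in analogy with the standard Cremona involution of $\PP^{r-1}$ based at $r$ points in general position. Composing these in the cyclic order dictated by a reduced expression for $w$ yields a birational self-map $f_w : X \ratmap X$ whose pullback on $H^2(X, \Z)$ equals $w$ by construction.

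The decisive and hardest step is to arrange the $P_i$ on $C$ so that $f_w$ has no exceptional prime divisor and is therefore a pseudo-automorphism. Using the parametrisation of the smooth locus $C_{sm} \isom \C$ (this is where the \emph{cuspidal} hypothesis is essential: it collapses the natural group law on $C_{sm}$ to the additive one), the condition that neither $f_w$ nor $f_w^{-1}$ contracts a divisor translates, at each iterated indeterminacy point of the Cremona step, into an additive relation among the parameters of the $P_i$. The claim is that a single placement of the $P_i$ in an arithmetic progression on $C_{sm}$ satisfies all these relations simultaneously; verifying this combinatorially is the technical core of the proof. Once $f_w$ is confirmed to be a pseudo-automorphism with $(f_w)^*|H^2(X,\Z) = w$, the hyperbolicity of $T_{p,q,r}$ (guaranteed by $r \ge 3$ and $\frac{1}{p} + \frac{1}{q} + \frac{1}{r} < 1$) forces the spectral radius of $w$ to exceed $1$, so $d_1(f_w) > 1$, and the entropy bound $h(f_w) \ge \log d_1(f_w) > 0$ follows from \cite{DS}.
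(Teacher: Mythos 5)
Your overall skeleton---decompose $w$ into permutation-type reflections plus a Cremona involution of $(\PP^{r-1})^{p-1}$, realize each generator geometrically on blowups of points of a cuspidal curve, and deduce positive entropy from hyperbolicity of $T_{p,q,r}$ together with \cite{DS}---is indeed the paper's route (Propositions \ref{Cal} and \ref{Cal2}). But note a structural point you gloss over: each generator is realized only as a pseudo-isomorphism between \emph{different} marked blowups, $X_{(D;c)} \dashrightarrow X_{w_i(D;c)}$; the composition is a self-map of a single $X$ only if the terminal tuple $w(D;c)$ is isomorphic to $(D;c)$ via an automorphism of $C$, and this closing-up condition is precisely what your placement of the $P_i$ must achieve.

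Here lies the genuine gap: placing the $P_i$ in an \emph{arithmetic} progression on the smooth locus $C_{\reg}\cong\C$ cannot work when $\rho(w)>1$. The closing-up condition says the degree-zero part $\rho_0$ of the marking satisfies $\rho_0\circ w = a\,\rho_0$ for some $a\in\C^*$, since $\Aut(C)$ (the maps $t\mapsto at+b$) acts on $\Pic^0(C)\cong\C$ through the multiplier $a$ alone. By Lemma \ref{rhoint}, $\rho_0$ is represented by a vector $v$ via the intersection form, so the condition forces $v$ to be an eigenvector of $w$ (modulo $\kappa$) with $a=\lambda^{-1}$. For the Coxeter element $w=(12\cdots n)r_{I,1}$ (say $p=2$) the eigenvector relation gives $t_{i+1}=\lambda^{-1}t_i$ on the cyclically shifted indices $r<i<n$, i.e.\ a \emph{geometric} progression of ratio $\lambda^{-1}$. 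An arithmetic progression is the orbit of a translation, which acts trivially on $\Pic^0(C)$, i.e.\ corresponds to $a=1$ and eigenvalue $1$---the parabolic, null-entropy regime, incompatible with $\lambda$ being a Salem number $>1$. Moreover, you explicitly defer the non-degeneracy verification (``the technical core''), but what is needed is $\rho(\alpha)\neq 0$ for \emph{every} real root $\alpha\in\Delta_n$ (membership in the $W$-stable set $U_C$), so that all intermediate configurations in the word for $w$ stay non-degenerate; with the eigenvector placement this is not a hand combinatorial check but exactly the statement $0\notin\Delta_n\cdot v$, which for the leading eigenvector of a Coxeter element is McMullen's nontrivial Proposition \ref{McTh}(1). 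The paper then closes up painlessly via Lemmas \ref{w-v} and \ref{equiv}: $w(D^v;c^v)=(D^{\lambda^{-1}v};c^{\lambda^{-1}v})\cong(D^v;c^v)$ under the scaling automorphism of $C$. To repair your proposal, replace the arithmetic placement by the eigenvector recipe $t_i=v\cdot e_i$ of \ref{resp} and invoke Proposition \ref{McTh} instead of the unproven combinatorial claim.
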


Here $C$ is the cuspidal curve of arithmetic genus one embedded in $(\PP^{r-1})^{p-1}$
by the product map $\Phi_{|D_1|} \times \cdots \times \Phi_{|D_{p-1}|}$
for some Cartier divisors $D_i$ of degree $r$ on $C$.
For instance, when $p = 2$, we can take
$$C = \{(1, z, z^2, ..., z^{r-2}, z^{r}) \, : \, z \in \C \}$$
in affine coordinates.

When $p = 2$ and $n = q+r$, we can take
$w = (12 \cdots n) r_{I,1}$, where the permutation is on the part $e_j$ of the
standard basis of the hyperbolic lattice
$\Lambda_n = h_1 \Z + \sum_{j=1}^{n} e_j \Z$
(naturally identified with $H^2(X, \Z)$)
and $r_{I,1}$ is the reflection corresponding to the root $\alpha_{I, 1} = h_1 - \sum_{j=1}^r e_j$ (cf.~\ref{Weyl}).

As a consequence of Theorem \ref{ThA} and Corollary \ref{h254} late on, we have:

\begin{corollary}\label{Cor1}
\begin{itemize}
 \item [(1)]
When $\{p, q, r\} = \{2, 3, 7\}$ $($as unordered sets$)$ and $r \ge 3$,
$f_w$ is a pseudo-automorphism of the blowup of $(\PP^{r-1})^{p-1}$
at $q+r$ points and $d_1(f_w) = 1.17628 \dots$ is the Lehmer number of
the Lehmer polynomial
$$x^{10} + x^9 - (x^7 + x^6 + x^5 + x^4 + x^3) + x + 1 .$$
 \item [(2)]
When $\{p, q, r\} = \{2, 4, 5\}$ $($as unordered sets$)$ and $r \ge 3$,
$f_w$ is a pseudo-automorphism of the blowup of $(\PP^{r-1})^{p-1}$
at $q+r$ points and $d_1(f_w) = 1.28064 \dots$ is
the largest root of the Salem polynomial
$$x^8 - x^5  - x^4 - x^3 + 1 .$$
 \item [(3)]
When $\{p, q, r\} = \{3, 3, 4\}$ $($as unordered sets$)$,
$f_w$ is a pseudo-automorphism of the blowup of $(\PP^{r-1})^{p-1}$
at $q+r$ points and $d_1(f_w) = 1.40127 \dots$ is
the largest root of the Salem polynomial
$$x^6 - x^4 - x^3 - x^2 + 1 .$$
\item[(4)]
If $(p, q, r) = (2, q, 4)$ and $q \ge 5$, the topological entropy
$h(f_w) = \log d_1(f_w) > 0$.
\end{itemize}
\end{corollary}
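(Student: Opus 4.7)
The plan is to deduce Corollary~\ref{Cor1} directly from Theorem~\ref{ThA} applied to each of the three specified unordered triples, and then to compute the characteristic polynomial of the Coxeter element $w_{p,q,r}$ acting on $H^2(X,\Z)$ in order to identify the spectral radius with the quoted Lehmer or Salem number.

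First I would verify the hypotheses of Theorem~\ref{ThA} in each case. A direct calculation gives $\tfrac{1}{2}+\tfrac{1}{3}+\tfrac{1}{7}=\tfrac{41}{42}$, $\tfrac{1}{2}+\tfrac{1}{4}+\tfrac{1}{5}=\tfrac{19}{20}$, and $\tfrac{1}{3}+\tfrac{1}{3}+\tfrac{1}{4}=\tfrac{11}{12}$, each strictly less than $1$. The condition $r\ge 3$, imposed in parts~(1) and~(2), just forces the choice of $r$ to be one of the entries of the triple that is $\ge 3$; it is automatic in case~(3). For each admissible ordering, Theorem~\ref{ThA} supplies a pseudo-automorphism $f_w$ of the blowup of $(\PP^{r-1})^{p-1}$ at $q+r$ points lying on a multi-degree-$r$ cuspidal curve, and guarantees that $d_1(f_w)$ equals the spectral radius $\rho(w_{p,q,r})$. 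Since $\rho(w_{p,q,r})$ depends only on the diagram $T_{p,q,r}$ and hence on the unordered triple, its numerical value is common to all admissible orderings within a single case.

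It then remains to compute $\det\!\bigl(x\cdot\id - w_{p,q,r}\bigr)$ on the hyperbolic lattice $\Lambda = h_1\Z + \sum_{j=1}^{n} e_j\Z$. Writing $w_{p,q,r}$ as the product of the simple reflections along the three branches of $T_{p,q,r}$ in the convention of~\ref{Weyl} and expanding in the basis $\{h_1,e_1,\ldots,e_n\}$, I expect the characteristic polynomial to factor as a product of cyclotomic polynomials times, respectively, the Lehmer polynomial $x^{10}+x^9-x^7-x^6-x^5-x^4-x^3+x+1$ for $T_{2,3,7}$, the Salem polynomial $x^8-x^5-x^4-x^3+1$ for $T_{2,4,5}$, and the Salem polynomial $x^6-x^4-x^3-x^2+1$ for $T_{3,3,4}$. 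The largest real root of the non-cyclotomic factor then yields the stated numerical value of $d_1(f_w)$.

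All the conceptual content already sits inside Theorem~\ref{ThA}; what remains is bookkeeping. The one point demanding some care is confirming that the non-cyclotomic factor indeed carries the spectral radius, which follows because in each case this factor is a Lehmer or Salem polynomial whose dominant real root exceeds~$1$, while the cyclotomic factors contribute only eigenvalues of modulus~$1$. That modest verification, rather than the matrix computation itself, is the main obstacle.
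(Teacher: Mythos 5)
Your proposal is correct and follows essentially the same route as the paper: Corollary~\ref{Cor1} is deduced directly from Theorem~\ref{ThA} after checking $\tfrac{1}{p}+\tfrac{1}{q}+\tfrac{1}{r}<1$ and observing that the spectral radius depends only on the unordered triple, the only difference being that the paper identifies the spectral radii by citing \cite[Table 5]{Mc02} (together with \cite[Proposition 7.1]{Mc02}, which already gives the Salem-times-cyclotomic factorization you propose to verify by hand). One small slip worth fixing: in case (3), where $p=3$, the lattice is $\Z h_1+\Z h_2+\sum_{j=1}^{7}\Z e_j$ rather than the $p=2$ form $\Z h_1+\sum_{j}\Z e_j$ in which you set up the computation, though this does not affect the argument since the Coxeter element and its characteristic polynomial are determined by the diagram $T_{p,q,r}$ alone.
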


The three types of $T_{p, q, r}$ in (1), (2) and (3) above are the only $T$-shaped minimal hyperbolic Coxeter diagrams
(cf.~\cite[Table 5]{Mc02}).
The three Salem numbers above are the smallest Salem numbers of
degrees $10$, $8$ and $6$, respectively. Hence one also realizes
the Lehmer number as $d_1(f_w)$ of the pseudo-automorphism of $X$ (a $10$-point blowup of $\PP^6$).

We remark that $h(f_w) = \log 1.28064 \dots$ is the smallest known topological entropy $(> 0$)
of a pseudo-automorphism on a rational threefold which is not of product type. In \cite{BK11},
the authors have constructed a pseudo-automorphism $f$ on the blowup of
$\PP^3$ at $2$ points and $13$
curves with $h(f) = \log 1.28064 \dots$. Our construction is different from theirs;
for instance, $f$ is induced by a quadratic birational map on $\PP^3$, while the $f_w$ in Corollary \ref{Cor1} (4)
all come from cubic maps; see the end of Section 4 for more details.

When $(p, q, r) = (2, 7, 3)$, $f_w$ is an automorphism of the blow-up of the projective plane
at $10$ points. This automorphism coincides with the one constructed in \cite[Appendix]{BK} and \cite[Theorem 1.1]{Mc07}.

When $(p, q, r) = (2, 6, 4)$, $w$ (or its power) seems to have been geometrically realized early
by Coble and Cossec-Dolgachev (cf.~\cite[p.~39]{Do}).

When $(p,q,r)=(3,4,3)$, the group of pseudo-automorphisms of the blowup of  $\PP^2 \times \PP^2$ at certain
collections of $7$ points has been studied in \cite{D} using different techniques. It would be interesting
to compare our construction with that in \cite{D}.

The structure of the paper is the following. In Section 2 we first recall the definition of the Weyl group $W(p,q,r)$
and of Coxeter elements. We then introduce marked cubic curves, we define an action of $W(p,q,r)$
on the markings and we study some properties of this action. In Section 3 we state Theorem \ref{ThB}
which will be used in the proof of Theorem \ref{ThA}.
In Section 4 we prove Theorems \ref{ThB} and \ref{ThA}  in the case $p=2$ and we also study some aspects of the geometry
of $X_{2,q,4}$ and of the pseudo-automorphism $f_w$. In the last Section 5 we complete the proof of Theorems \ref{ThA} and \ref{ThB}
for all $p\geq 2$.

We remark that in  Oguiso-Perroni \cite{OP}, the authors have constructed
automorphisms (of product type) of positive entropy and even with Siegel disks on the product
of McMullen's rational surface and a toric variety.
Moreover, after the arXiv version of the present paper appeared,
Oguiso and Troung \cite{OT} constructed very interesting examples of primitive automorphisms of positive entropy
on certain rational or Calabi-Yau $3$-folds.

\par \vskip 1pc \noindent
{\bf Acknowledgement.}
The present work took place when the second author was visiting Bayreuth in October 2011
and in the realm of
the DFG Forschergruppe 790 Classification of algebraic surfaces and
compact complex manifolds and was partly supported by an ARF of NUS.
We express our thanks to Professor
Catanese for his interest, warm encouragement and hospitality,
and Professor Dolgachev for bringing the very interesting paper \cite{D} to our attention.
The second author would like to thank Max Planck Institute for Mathematics, Bonn, for the warm hospitality,
Professor T. -C. Dinh for clarifying the relation between
entropy and dynamical degrees and
Professor McMullen for the discussion on the Salem numbers.

\section{Preliminaries}\label{Pri}

\begin{setup}\label{Weyl}
{\rm Weyl groups and roots (cf. \cite{Hu})}
\end{setup}

Let $p \ge 2$, $q \ge 2$ and $r \ge 3$ be integers.
Let
$$n := p + q + r - 2 .$$
We now define the {\it root system} $L_n$ of type
$T_{p, q, r}$.
Let
$$\Lambda = \Lambda_n = \Z h_1+ \cdots + \Z h_{p-1} + \Z e_1 + \cdots + \Z e_{q+r}$$
be the lattice of rank $n+1$ with basis
$$h_1, h_2, \dots, h_{p-1}, e_1, \dots , e_{q+r} .$$
Late on in Section \ref{p=2}, we treat the case $p = 2$ and set $e_0 = h_1$.
The following equations define an {\it inner product}
on $\Lambda$ (cf. \cite[\S 3]{Mu2}, \cite[\S 2]{D}):
$$\begin{aligned}
h_i^2 &= h_i \cdot h_i = r-2 \,\, (1 \le i < p), \, \\
h_i \cdot h_j &= r-1 \,\, (i \ne j), \,\, h_i \cdot e_j = 0 , \\
e_i^2 &= e_i \cdot e_i = -1 \,\, (1 \le i \le q+r), \,
e_i \cdot e_j =0  \,\, (i \not= j).
\end{aligned}$$
Set
$$\kappa := r \, \sum_{i=1}^{p-1} h_i - ((p-1)(r-1) - 1) \, \sum_{j=1}^{q + r} e_j.$$
We will see that $\kappa$ corresponds to the anti-canonical divisor of some blowup $X$ of
$(\PP^{r-1})^{p-1}$ at $q+r$ points, and $\Lambda_n$ is isomorphic to $H^2(X, \Z)$.
The {\it root system} (of type $T_{p, q, r}$) is
$$
L_n := \kappa^{\perp} \cap \Lambda_n = \{\alpha \in \Lambda_n \, | \, \alpha \cdot \kappa = 0\} \, .
$$
The {\it simple roots}:
$$\begin{aligned}
\beta_1 &= -h_1 + h_2, \, \beta_2 = -h_2 + h_3, \, \dots, \beta_{p-2} = -h_{p-2} + h_{p-1}, \, \\
\alpha_0 &= h_1 - \sum_{i=1}^r e_i, \,\, \alpha_1 = e_1 - e_2, \,\, \alpha_2 = e_2 - e_3, \,\, \dots, \,\,
\alpha_{q+r-1} = e_{q+r-1} - e_{q+r}
\end{aligned}$$
form a basis of $L_n$.
The corresponding Dynkin diagram is shown in Figure 1.

\begin{center}
\setlength{\unitlength}{0.8cm}
\begin{picture}(8,3)
\thicklines

\put(-0.3,0.99){\makebox{$\dots$}}

\put(-1.2,1){\circle{0.2}}
\put(-1.3,1.5){\makebox{$\alpha_1$}}

\put(-1.1,1){\line(1,0){0.4}}

\put(-0.5,0.99){\makebox{$\dots$}}

\put(0.5,1){\line(1,0){0.4}}

\put(1,1){\circle{0.2}}

\put(1.1,1){\line(1,0){0.8}}

\put(2,1){\circle{0.2}}
\put(1.85,1.5){\makebox{$\alpha_r$}}

\put(2,0.1){\line(0,1){0.76}}
\put(2,0){\circle{0.2}}
\put(2.2,-0.1){\makebox{$\alpha_0$}}

\put(2,-0.1){\line(0,-1){0.8}}
\put(2,-1){\circle{0.2}}
\put(2.2,-1){\makebox{$\beta_1$}}
\put(2,-1.1){\line(0,-1){0.4}}
\put(1.94,-2){\makebox{$\vdots$}}

\put(2,-2.1){\line(0,-1){0.4}}
\put(2,-2.6){\circle{0.2}}
\put(2.2,-2.5){\makebox{$\beta_{p-2}$}}

\put(2.1,1){\line(1,0){0.8}}
\put(3,1){\circle{0.2}}
\put(3.1,1){\line(1,0){0.8}}
\put(4,1){\circle{0.2}}
\put(4.1,1){\line(1,0){0.4}}
\put(4.7,0.99){\makebox{$\dots$}}
\put(5.5,1){\line(1,0){0.4}}
\put(6,1){\circle{0.2}}
\put(5.7,1.5){\makebox{$\alpha_{q+r-1}$}}
\put(1,-3.5){\makebox{Figure 1. }}
\end{picture}
\end{center}

\vspace{3cm}

Any $\alpha \in L_n$ with $\alpha^2 = -2$ determines the {\it reflection}
$r_{\alpha}\in O(L_n)$ given by:
$$
r_{\alpha}(x) = x + (x \cdot \alpha) \alpha.
$$
For distinct $i, j \ge 1$,
$r_{e_i - e_j}$ (resp. $r_{h_i - h_j}$)
is the {\it transposition} interchanging the basis elements $e_i$ and $e_j$
(resp. $h_i$ and $h_j$) while fixing the other $e_k$'s and $h_{\ell}$'s.
For any $1 \le k < p$ and subset $I \subseteq \{1, 2, \dots, n\}$ with $|\, I\, | = r$,
we define the `root'
$$\alpha_{I, k} = h_k - \sum_{i \in I} e_i \in L_n$$
and the reflection (called {\it a Cremona involution}):
$$
r_{I, k} := r_{\alpha_{I, k}} \, .
$$
Its action on $\Lambda$ is given as follows:
$$\begin{aligned}
r_{I, k}(h_k) &= h_k + (h_k \cdot \alpha_{I, k}) \alpha_{I, k} = (r-1) h_k - (r-2) \sum_{i \in I} e_i, \\
r_{I, k}(h_i) &= h_i + (h_i \cdot \alpha_{I, k}) \alpha_{I, k} = (r-1)h_k + h_i - (r-1) \sum_{j \in I} e_j \,\,\, (i \ne k), \\
r_{I, k}(e_i) &= e_i + \alpha_{I,k} \,\,\, (i \in I), \\
r_{I, k}(e_j) &= e_j \,\,\, (j \not\in I).
\end{aligned}
$$

The {\it Weyl group}
$$W := W(p, q, r) = W(T_{p,q,r}) \subset O(L_n) \subset O(\Lambda_n)$$
is the subgroup of $O(L_n)$ generated by the reflections
$$
r_{\beta_i} \, (1 \le i \le p-2), \,\, r_{\alpha_j} \,\, (0 \le j < q+r) \, .
$$
Elements in the set below are called (real) {\it roots}
$$\Delta_n := \{w(\beta_i), \, w(\alpha_j) \, | \, w \in W, \, 1 \le i \le p-2, \, 0 \le j < q+r\} .$$

\begin{definition}\label{cox}
A Coxeter element $w$ of $W$ is the product $w = \prod_{i=1}^n r_{\gamma_i}$
where
$$\{\gamma_i\}_{i=1}^n = \{\beta_i\}_{i=1}^{p-2} \cup \{\alpha_j\}_{j=0}^{q+r-1}$$
as sets.
When $p = 2$,
choose $(\gamma_1, \dots, \gamma_n) = (\alpha_1, \dots, \alpha_{q+r-1}, \alpha_0)$,
we get
$$w = (12 \dots n) r_{I, 1}$$
with $I = \{1, 2, \dots, r\}$,
the product of a permutation (on $e_1, \dots, e_n$) and a Cremona involution.
This Coxeter element will be also denoted by $w_{2,q,r}$.
\end{definition}

\begin{remark}
Coxeter elements are conjugate to each other, since the Dynkin diagram $T_{p, q, r}$ is a tree
(cf. \cite[\S3.16,  \S 8.14]{Hu}).
\end{remark}

\begin{setup}\label{marked curves}
{\rm Marked cuspidal curves}
\end{setup}

Let $$C = \{YZ^2 = X^3\} \subset \PP^2$$
be the plane {\it cuspidal curve} (of arithmetic genus $1$). Consider the subset
$$
\Lambda_C \, \subset \, \left( \Pic^r(C)\right)^{p-1} \times C^{q+r}, \,\, \text{or equivalently} \,\,
\Lambda_C \, \subset \, \left( \Pic^r(C)\right)^{p-1} \times (\Pic^1(C))^{q+r} \, , \, r\geq 3
$$
consisting of $(n+1)$-tuples $$(D; c) := (D_1, \dots , D_{p-1}; c_1, \dots, c_{q+r})$$
with $c_i$ contained in the smooth locus $C \setminus \{(0, 1, 0)\}$ of $C$.

Given $(D; c)  \in \Lambda_C$,
define a {\it marking} on $C$
$$\rho = \rho_{(D; c)} : \Lambda \to \Pic(C)$$
by setting
$$
\rho(h_i) = D_i, \,\,\rho(e_j) = [c_j].
$$
Here a {\it marking} is a group homomorphism $\rho \colon \Lambda \to \Pic (C)$ such that
$\rho (h_i)\in \Pic^r (C)$ and $\rho (e_j)=[p_j]$, with $p_j\in C \setminus \{(0, 1, 0)\}$.

\begin{remark}\label{pair-mark}
The $(n+1)$-tuple  $(D; c)  \in \Lambda_C$ and the marking
$\rho = \rho_{(D; c)}$ on $C$ determine each other uniquely.
\end{remark}

As observed in \cite[Proposition 4.1, Theorem 4.3]{Mc07},
since $\Aut(C)$ acts transitively on
$$\Pic^0(C) \cong \C$$
and for any $u \in \Lambda$
$$\deg(\rho(u)) =  \frac{1}{((r-1)(p-1)-1)}(\kappa \cdot u),$$
we have:

\begin{lemma}\label{rho0}
$\rho$ is determined, up to isomorphism, by its restriction
$$\rho_0 : \Ker(\deg \circ \rho) = L_n \to \Pic^0(C) .$$
\end{lemma}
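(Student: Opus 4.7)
The plan is to exploit the fact that the degree map $\deg\circ\rho:\Lambda\to\Z$ is intrinsic, independent of the choice of marking. Indeed, the formula $\deg(\rho(u))=\frac{1}{r-2}(\kappa\cdot u)$ established just above depends only on $u$ and the fixed inner product on $\Lambda$, so its kernel is the fixed sublattice $L_n=\kappa^{\perp}\cap\Lambda$. Moreover, the resulting short exact sequence
$$
0\to L_n\to\Lambda\xrightarrow{\deg\circ\rho}\Z\to 0
$$
splits because $\Z$ is free; a concrete splitting sends $1\mapsto e_1$, since $\deg(\rho(e_1))=\deg(c_1)=1$. Hence $\Lambda=L_n\oplus\Z e_1$ as abelian groups, and any marking $\rho$ is completely specified by the pair $(\rho_0,\rho(e_1))$ with $\rho_0:L_n\to\Pic^0(C)$ and $\rho(e_1)\in\Pic^1(C)$.

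Now suppose two markings $\rho,\rho'$ share the same restriction $\rho_0=\rho'_0$. I would produce an explicit $\varphi\in\Aut(C)$ realizing $\rho'$ from $\rho$. Since $\rho_0=\rho'_0$ and both $\rho(e_1),\rho'(e_1)\in\Pic^1(C)$, the element $b:=\rho'(e_1)-\rho(e_1)$ lies in $\Pic^0(C)$. Using the Abel--Jacobi identification of the generalized Jacobian of the cuspidal cubic with its smooth locus, $\Pic^0(C)\cong\mathbb{G}_a\cong\C$, I view $b$ as a point of $\mathbb{G}_a$ and take $\varphi=t_b$, the translation by $b$ on $C\setminus\{(0,1,0)\}$, extended to $C$ by fixing the cusp. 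The decisive identity is that for any divisor $D=\sum n_iP_i$ of degree $d$, the translate $t_b(D)=\sum n_i(P_i+b)$ satisfies
$$
[t_b(D)]-[D]=d\cdot b\quad\text{in}\quad\Pic^0(C)\cong\mathbb{G}_a,
$$
by additivity of Abel--Jacobi. Consequently the induced action on the marking is
$$
(t_b)_{*}\rho(u)=\rho(u)+\deg(u)\cdot b\quad\text{for every }u\in\Lambda.
$$

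On $L_n$, where $\deg$ vanishes, this formula leaves $\rho_0$ unchanged, hence coincides with $\rho'|_{L_n}$. On the splitting generator $e_1$, it yields $\rho(e_1)+b=\rho'(e_1)$ by the choice of $b$. Since $\Lambda=L_n\oplus\Z e_1$, the two group homomorphisms $(t_b)_{*}\circ\rho$ and $\rho'$ agree on generators and therefore coincide, so $\rho$ and $\rho'$ are isomorphic via $t_b\in\Aut(C)$. The only real technical ingredient is the explicit description of how translations in $\Aut(C)$ act on each $\Pic^d(C)$ for the cuspidal cubic, which rests on identifying $\Pic^0(C)$ with the additive group of $C^{\mathrm{sm}}$; once this is in hand, the rest is a routine splitting-lemma argument for abelian groups.
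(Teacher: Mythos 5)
Your proof is correct and takes essentially the same route the paper indicates, namely the one it outsources to \cite[Proposition 4.1, Theorem 4.3]{Mc07}: the formula $\deg(\rho(u))=\frac{1}{r-2}(\kappa\cdot u)$ makes $\deg\circ\rho$ intrinsic with kernel $L_n$, and an automorphism of the cuspidal curve absorbs the remaining discrepancy. You also correctly pin down the detail the paper's one-line justification glosses over: the automorphisms needed here are the translations $t_b\in\Aut(C)$, which act on $\Pic^d(C)$ by $[D]\mapsto [D]+d\cdot b$ (hence trivially on $\Pic^0(C)$, so that $\rho_0$ is preserved while $\rho(e_1)$ is adjusted), rather than the scalar action on $\Pic^0(C)\cong\C$ that the paper invokes later in Lemma \ref{equiv}.
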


\par \noindent
Here two markings $\rho$ and $\rho'$ are {\it isomorphic} if there is an
$f\in \Aut (C)$ such that $f^*\circ \rho=\rho'$.

Set
$$U_C := \{(D; c)\in \Lambda_C \, | \, \rho_{(D; c)}(\alpha) \not= 0, \, \forall \, \alpha \in \, \Delta_n\}.$$

As observed in \cite[Example 3]{Mu}, applying the defining condition of $U_C$ to the roots $\alpha = e_i - e_j$,
and $\alpha_{I,k}$ with $|I| = r$, we have:

\begin{remark}\label{non-deg}
If $(D; c) \in U_C$, then
$c_i \ne c_j$ ($i \ne j$), and $\sum_{i \in I} c_i \not\in |D_k|$ for any $I$ with $|\, I \, | =r$
and any $k \in \{1,\dots , p-1\}$,
i.e., no $r$ points of $P(k)_{i} := \Phi_{|D_k|}(c_i) \in \PP^{r-1}$, for $k$  fixed, are contained in a hyperplane of $\PP^{r-1}$.
Here $ \Phi_{|D_k|}\colon C \to \PP^{r-1}$ is the embedding determined by $D_k$ (cf. Lemma \ref{RR} below).
\end{remark}

\begin{definition}\label{wpair}
Using markings, there is an action of $W$  on $\Lambda_C$. It is defined  by the formula (cf. Remark \ref{pair-mark}):
$$
\rho_{w(D;c)}:=\rho_{(D;c)}\circ w \, .
$$
\end{definition}

Thus $W$ acts on $U_C$ because $w(\Delta_n) = \Delta_n$. Namely, we have:

\begin{lemma}
If $w \in W$ and $(D; c) \in U_C$ then $w (D;c) \in U_C$.
\end{lemma}

\begin{setup}\label{resp}
{\rm The correspondence between vectors of $\Lambda_n \otimes \C$ and markings on $C$}
\end{setup}

Let
$$v =\sum_{i=1}^{p-1} \xi_i h_i + \sum_{j=1}^{q+r} \eta_j e_j \in \Lambda_n \otimes \C .$$
We will define an $(n+1)$-tuple $(D^v; c^v)$ in the following way.
Let
$$p(t) = (t, t^3, 1) \in C$$
be a parametrization.
Define $t_j$, $c^v_j$ and $D^v_i$ ($1 \le i < p$), by
\begin{eqnarray}\label{1}
\begin{aligned}
r \, t_0 &= v \cdot h_1 = (r-2) \xi_1 + (r-1) \sum_{i =2}^{p-1} \xi_{i} , \\
t_j   &= v \cdot e_j = -\eta_j \,\, (1 \le j \le q+r) ,  \\
c^v_j &= p(t_j - t_0) \in C, \\
D^v_i &= [rp(0) + \, p(\xi_1) -p( \xi_i)] \in \Pic^{r}(C) .
\end{aligned}
\end{eqnarray}
In this way  we get the $(n+1)$-{\it tuple}:
$$(D^v; c^v) := (D^v_1, \dots , D^v_{p-1}; \, c^v_1, \dots , c^v_{q+r}) \, \in \,
(\Pic^r(C))^{p-1} \, \times \, C^{q+r} .$$

Then $(D^v; c^v)$ determines a marking $\rho^v$ on $C$ by setting
$\rho^v(h_i) = D^v_i$, $\rho^v(e_j) = [c_j^v]$.

\begin{lemma}\label{rhoint} (see also \cite[Theorem 7.5]{Mc07})
The restriction $\rho_0^v : L_n \to \Pic_0(C) \cong \C$ of $\rho^v$
satisfies:
$$\rho_0^v(u) = (u \cdot v) [p(1) - p(0)] .$$
Hence for a root $\alpha \in \Delta_n$,
we have $\rho^v(\alpha) = 0$ if and only if $\alpha \cdot v = 0$.
In particular, the $(n+1)$-tuple $(D^v; c^v) \in U_C$ if and only if
$0 \not\in \Delta_n \cdot v$.
\end{lemma}
\begin{proof}
Direct computations show that the formula above is true  for the elements
$\beta_i$ ($1 \le i \le p-2$), $\alpha_j$ ($0 \le j < q+r$) as defined in \ref{Weyl}. This proves the result
since these elements form a basis of $L_n$.
\end{proof}

\begin{remark}\label{D-v}
Conversely, for any $(n+1)$-tuple $(D; c)$, we can use the equations in \ref{resp}
to define a vector $v$
such that $(D; c) = (D^v, c^v)$.
\end{remark}

\begin{lemma}\label{w-v}
For any $w \in W$, we have $\rho^v \circ w^{-1} = \rho^{w(v)}$,
and $w^{-1}(D^v, c^v) = (D^{w(v)}; c^{w(v)})$.
\end{lemma}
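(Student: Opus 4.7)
The plan is to observe first that the two assertions in Lemma \ref{w-v} are equivalent. By Definition \ref{wpair} (applied with $w^{-1}$ in place of $w$), the action of $w^{-1}$ on a pair in $\Lambda_C$ is spelled out coordinate by coordinate as $w^{-1}(D;c) = ((\rho\circ w^{-1})(e_0);(\rho\circ w^{-1})(e_1),\dots,(\rho\circ w^{-1})(e_n))$, where $\rho=\rho_{(D;c)}$. Combined with Remark \ref{pair-mark}, this says exactly that $w^{-1}(D^v;c^v)=(D^{w(v)};c^{w(v)})$ is the same statement as $\rho^v\circ w^{-1}=\rho^{w(v)}$. Hence it suffices to prove the marking identity.

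The key reduction is Lemma \ref{rho0}: a marking is determined (up to isomorphism) by its restriction to $L_n$. So I would reduce to verifying the equality of the degree-zero parts $\rho_0^v\circ w^{-1}\big|_{L_n}=\rho_0^{w(v)}\big|_{L_n}$. For this I need that $L_n$ is $W$-stable, which follows because each generating reflection $r_\alpha$ with $\alpha\in L_n$ satisfies $r_\alpha(\kappa)=\kappa+(\kappa\cdot\alpha)\alpha=\kappa$ (as $\alpha\perp\kappa$), so $W$ fixes $\kappa$ and hence preserves $L_n=\kappa^{\perp}$.

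Once restricted to $L_n$, the computation is a single line: for $u\in L_n$, $w^{-1}(u)\in L_n$, so Lemma \ref{rhoint} gives
\[
(\rho^v\circ w^{-1})(u)=\rho_0^v(w^{-1}(u))=(w^{-1}(u)\cdot v)\,[p(1)-p(0)].
\]
Since $W\subset O(\Lambda_n)$ by the very definition in \ref{Weyl}, the action of $w$ is orthogonal, so $w^{-1}(u)\cdot v=u\cdot w(v)$. Applying Lemma \ref{rhoint} a second time,
\[
(u\cdot w(v))\,[p(1)-p(0)]=\rho_0^{w(v)}(u),
\]
which finishes the restricted equality and hence, via Lemma \ref{rho0}, the first assertion of the lemma; the second assertion then follows from the equivalence established in the first paragraph. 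The whole argument is essentially orthogonality plus the fact that $W$ stabilizes $\kappa$, so no serious obstacle is anticipated; the only delicate point is simply being explicit that the identification of the pair and its marking (Remark \ref{pair-mark}) is carried along correctly when translating between the two formulations.
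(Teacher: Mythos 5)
Your proof is correct and takes essentially the same route as the paper's, which likewise deduces the marking identity from Lemma \ref{rhoint} together with Lemma \ref{rho0} using $w \in O(\Lambda_n)$, and then obtains the statement on tuples from Definition \ref{wpair} and Remark \ref{pair-mark}. Your extra details---checking that $W$ fixes $\kappa$ and hence preserves $L_n$, and the explicit step $w^{-1}(u)\cdot v = u\cdot w(v)$---merely flesh out what the paper leaves implicit, and you rightly carry along the ``up to isomorphism'' caveat from Lemma \ref{rho0}, which is indeed how the equalities in the lemma must be read (literal equality of markings can fail by a translation in $\Pic^0(C)$, exactly as the paper's later use of the lemma, qualified by ``up to the action of $\Aut(C)$,'' confirms).
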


\begin{proof}
The first part follows from Lemma \ref{rhoint} and Remark \ref{rho0},
since $w \in O(\Lambda_n)$.
The second follows from the first and Definition \ref{wpair}
(cf. Remark \ref{pair-mark}).
\end{proof}

\begin{lemma}\label{equiv} $($cf. \cite[Corollary 7.7]{Mc07}$)$
Let $u,v \in \Lambda_n \otimes \C$ with
$\Delta_n \cdot u \not\ni 0 \not\in \Delta_n \cdot v$.
Then
$$
u = a v + b \kappa \, \Longleftrightarrow \, (D^u; c^u) \cong (D^v; c^v)\, .
$$
\end{lemma}

\begin{proof}
The $(n+1)$-tuples are determined by their markings on $C$ or equivalently by their restrictions on $L_n$
(cf. Remarks \ref{pair-mark} and \ref{rho0}), while the latter is determined by the inner product
on $L_n = \Lambda \cap \kappa^{\perp}$ (cf. Lemma \ref{rhoint}). The lemma follows
since $\Aut(C)$ acts on $\Pic^0(C)$ by scalar multiplication.
\end{proof}

\begin{setup}\label{lead}
{\rm
Let $w\in W$ with {\it spectral radius} $\rho(w)>1$. When $\frac{1}{p}+\frac{1}{q}+\frac{1}{r}<1$,
the root system $L_n$ is hyperbolic ($\kappa^2<0$). Hence
$\rho(w)$ is a Salem number
and $\det(xI-w)=S(x)\cdot C(x)$, where $S(x)$ is a Salem polynomial (cf.~\cite[Proposition 7.1]{Mc02}).
We say that $\lambda\in \C$ is a {\it leading eigenvalue} if $S(\lambda)=0$.
So $\rho(w)$ is a leading eigenvalue.
We say that $v \in L_n \otimes \C$ is a {\it leading eigenvector} if $w(v)=\lambda v$ with $\lambda$
a leading eigenvalue.
}
\end{setup}

\begin{proposition}\label{McTh}
Let $r \ge 3$.
Let $v \in L_n \otimes \C = (\Lambda \otimes \C) \cap \kappa^{\perp}$
be an eigenvector of some $w \in W$ with eigenvalue $\lambda$.
Then $0 \not\in \Delta_n \cdot v$, i.e. $(D^v, c^v) \in U_C$ in the sense of
Lemma $\ref{rhoint}$,
if either one of the following
two conditions is satisfied.
\begin{itemize}
\item [(1)]
$w$ is a Coxeter element and $v$ is a leading eigenvector.
\item[(2)]
$\lambda$ is not a root of unity; and $w$ has no periodic roots, i.e., no positive power of $w$
fixes a root in $\Delta_n$.
\end{itemize}
\end{proposition}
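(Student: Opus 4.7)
My plan is to argue by contradiction. Suppose some root $\alpha \in \Delta_n$ satisfies $\alpha \cdot v = 0$. Since $w$ preserves the bilinear form and $w(v) = \lambda v$, for every $k \in \Z$,
\[
w^k(\alpha) \cdot v \;=\; \alpha \cdot w^{-k}(v) \;=\; \lambda^{-k}(\alpha \cdot v) \;=\; 0,
\]
so the full $w$-orbit of $\alpha$ lies in $v^\perp$.

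A key structural input (setup~\ref{lead} and \cite[Proposition~7.1]{Mc02}) is that for $w \in W$ with $\rho(w) > 1$ on the hyperbolic lattice $L_n$, the characteristic polynomial factors as $\det(xI-w) = Q(x)R(x)$ with $Q$ an irreducible Salem polynomial and $R$ a product of cyclotomic polynomials; the latter because $R$ corresponds to the action of $w$ on the negative-definite complement of the $Q$-eigensubspace $V_S$, whose orthogonal group is finite. In case~(2), the hypothesis that $\lambda$ is not a root of unity, combined with Kronecker's theorem and the Lorentzian signature of $L_n$ (which admits at most one real pair $(\mu,\mu^{-1})$ of modulus $\ne 1$ and no complex eigenvalues of modulus $\ne 1$), implies $\rho(w) > 1$ and $\lambda$ is a root of $Q$; in case~(1) this is immediate. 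In both cases $V_\lambda$ and $V_{\lambda^{-1}}$ are therefore one-dimensional with non-degenerate pairing.

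Next I would extract a rational orthogonality statement via Galois invariance. Decomposing $L_n \otimes \C = \bigoplus_\mu V_\mu$ and writing $\alpha = \sum_\mu \alpha_\mu$, the $w$-invariance of the form gives $V_\mu \perp V_\nu$ unless $\mu\nu = 1$, so $\alpha \cdot v = \alpha_{\lambda^{-1}} \cdot v$, whence $\alpha_{\lambda^{-1}} = 0$. Since $\alpha$ has integer coordinates and is therefore fixed by every $\sigma \in \Gal(\overline{\Q}/\Q)$, applying $\sigma$ to $\alpha \cdot v = 0$ yields $\alpha \cdot \sigma(v) = 0$ and correspondingly $\alpha_{\sigma(\lambda)^{-1}} = 0$. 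Consequently $\alpha$ is orthogonal to the Galois closure $V_S = \bigoplus_\sigma V_{\sigma(\lambda)}$, i.e.\ $\alpha \in V_S^\perp \cap L_n$. On this negative-definite sublattice $w$ acts through $R$, hence with finite order $m$, and so $w^m\alpha = \alpha$: $\alpha$ is a periodic root. In case~(2) this directly contradicts the hypothesis. In case~(1) one must verify separately that a Coxeter element of $W(T_{p,q,r})$ has no periodic roots: this is done by inspecting $R(x)$ in each of the minimal hyperbolic cases $\{p,q,r\} \in \{\{2,3,7\},\{2,4,5\},\{3,3,4\}\}$ and checking that no element of $\Delta_n$ lies in $V_S^\perp \cap L_n$, following \cite{Mc02}.

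The step I expect to be the main obstacle is this last no-periodic-roots verification for the Coxeter element in case~(1): once that is in hand, everything else is a clean combination of Galois theory, the Kronecker-type signature argument, and the eigenspace structure forced by the Lorentzian form on $L_n$.
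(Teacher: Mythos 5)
Your treatment of case (2) is sound, and it is in substance exactly the argument behind \cite[Theorem 2.6]{Mc07}, which is all the paper itself offers (its ``proof'' of Proposition~\ref{McTh} is a citation to \cite[Theorems 2.6 and 2.7]{Mc07} together with the remark that the diagram is bipartite and that $L_n$ has signature $(1,n-1)$). The chain you give --- the orbit identity $w^k(\alpha)\cdot v=\lambda^{-k}(\alpha\cdot v)=0$, the factorization $\det(xI-w)=Q(x)R(x)$ from \cite[Proposition 7.1]{Mc02} with $Q$ an irreducible Salem polynomial and $R$ a product of cyclotomics, the Kronecker-plus-signature argument showing $\lambda$ is a root of $Q$, the Galois step killing every component $\alpha_{\sigma(\lambda)^{-1}}$, and the finite order of $w$ on the negative definite lattice $V_S^{\perp}\cap L_n$ forcing $w^m\alpha=\alpha$ --- is correct, granting small points you should make explicit: $\dim V_\lambda=1$ because a two-dimensional $\lambda$-eigenspace would be totally isotropic, the pairing $V_\lambda\times V_{\lambda^{-1}}$ is nonzero by nondegeneracy of the form, and $w$ is semisimple when $\rho(w)>1$ (finite order on $\ker R(w)$, irreducibility of $Q$ on $\ker Q(w)$), so the eigenspace decomposition you use is legitimate.

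The genuine gap is the final step of case (1). Proposition~\ref{McTh}(1), and Theorem~\ref{ThA} which rests on it, applies to \emph{every} $T_{p,q,r}$ with $\frac1p+\frac1q+\frac1r<1$ (cf.\ Remark~\ref{pqr}) --- an infinite family, e.g.\ $(2,q,r)$ for all $\frac1q+\frac1r<\frac12$, which is precisely what produces blowups of $\PP^{r-1}$ at $q+r$ points for all $r$. The three diagrams you propose to inspect, $\{2,3,7\}$, $\{2,4,5\}$, $\{3,3,4\}$, are merely the \emph{minimal} hyperbolic ones singled out in Corollary~\ref{Cor1}; verifying ``no periodic roots'' for those three proves nothing about the general case. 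What is needed is the uniform statement that a Coxeter element of \emph{any} hyperbolic $T_{p,q,r}$ has no periodic roots, which is exactly \cite[Theorem 2.7]{Mc07}; its proof is not a case check but uses the bipartite factorization $w=w_1w_2$ into two involutions, available because the diagram is a tree --- this is the content of the paper's phrase ``as our diagram is bipartite.'' Without this (or another argument uniform in $(p,q,r)$), your case (1) is incomplete. A secondary caveat even for a single diagram: ``checking that no element of $\Delta_n$ lies in $V_S^{\perp}\cap L_n$'' is not a finite task as stated, since $\Delta_n$ is infinite in the hyperbolic case; one must first note that roots satisfy $\alpha^2=-2$ and that $V_S^{\perp}\cap L_n$ is negative definite, so only its finitely many vectors of norm $-2$ need be examined.
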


\begin{proof}
The results follow from the calculation in \cite[Theorems 2.6 and 2.7]{Mc07},
as our diagram is bipartite; see also \cite[Discussions before Theorem 1.3 and after Theorem 3.1]{Mc02}.
Indeed, in (1),
the root system $L_n$ is hyperbolic of signature $(1, n-1)$.
\end{proof}

\begin{remark}\label{pqr}
(1) happens exactly when $\frac{1}{p}+\frac{1}{q}+\frac{1}{r}<1$
(cf. \cite[Table 5]{Mc02}).
\end{remark}

\section{Main Theorem}

The following result will be used to prove Theorem \ref{ThA}.
The proof is contained in the next sections.

\begin{theorem}\label{ThB}
Let $w$ be an element of the Weyl group $W = W(p, q, r)$, with $r \ge 3$.
Let $v \in L_n \otimes \C = (\Lambda \otimes \C) \cap \kappa^{\perp}$
be an eigenvector of $w$ with $w(v) = \lambda v$.
Assume that  $0 \not\in \Delta_n \cdot v$, i.e., $(D^v, c^v) \in U_C$
in the sense of Lemma $\ref{rhoint}$.
Then there exist a blowup
$$X = X_{p, q, r} \,\, \to \,\, (\PP^{r-1})^{p-1} =
\PP^{r-1} \times \cdots \times \PP^{r-1}$$
at $q+r$ points $P_i$ lying on the  cuspidal curve $\Phi_{|D^v|}(C) \subset (\PP^{r-1})^{p-1}$
of multi-degree $r$ and a pseudo-automorphism
$f_w : X \,\, \dashrightarrow \,\, X$
such that $(f_w)^* | H^2(X, \Z)$ equals $w$.

Further, if $|\lambda| > 1$, then $\lambda$ is equal to $|\lambda|$, the spectral radius $\rho(w)$ of $w$ and also
the first dynamical degree $d_1(f_w)$ of $f_w$.
\end{theorem}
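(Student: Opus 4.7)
The plan is to follow the McMullen-style marked-curve strategy, extending the correspondence of Section \ref{Pri} from $p=2$ to general $p$, and building $f_w$ by decomposing $w$ into Weyl-group generators and realizing each as a birational self-map of $Y := (\PP^{r-1})^{p-1}$ that lifts to a pseudo-automorphism of an appropriate blowup. First I would extend the definitions of $\Lambda_C$, $U_C$, the marking $\rho_{(D;c)}$, and the vector-to-tuple map $v \mapsto (D^v, c^v)$ to parameterize $(p-1)$-tuples of degree-$r$ divisors $(D_1, \dots, D_{p-1})$ on $C$ together with $n = q+r$ smooth points $(c_1, \dots, c_n)$, via $h_i \mapsto D_i$ and $e_j \mapsto [c_j]$. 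The key identity $\rho_0^v(u) = (u \cdot v)[p(1)-p(0)]$ on $L_n$ (Lemma \ref{rhoint}) and its consequences (Lemmas \ref{equiv}, \ref{w-v}) generalize formally, since the inner product on $\Lambda_n$ was designed precisely so that $\kappa \cdot u$ records the total degree of $\rho(u)$.

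Given such a tuple $(D^v, c^v) \in U_C$ (which is the hypothesis $0 \notin \Delta_n \cdot v$ reformulated), the geometric model is built by embedding $C \hookrightarrow Y$ via $\Phi = \Phi_{|D_1^v|} \times \cdots \times \Phi_{|D_{p-1}^v|}$ (each factor is an embedding since $\deg D_i^v = r \ge 3$ on a curve of arithmetic genus one), setting $P_j = \Phi(c_j^v)$, and taking $X = \mathrm{Bl}_{P_1,\dots,P_n} Y$. Under the identification $H^2(X,\Z) \cong \Lambda_n$ given by $h_i = \pi_i^* \mathcal{O}_{\PP^{r-1}}(1)$ (with $\pi_i : Y \to \PP^{r-1}$ the $i$-th projection) and $e_j = [E_j]$, a direct computation yields $-K_X \leftrightarrow \kappa$, and Remark \ref{non-deg} guarantees that no $r$ among the points $\pi_k(P_j)$ lie in a hyperplane of the $k$-th factor---the general-position condition required for Cremona-type birational maps.

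Next I would realize each generator of $W(p,q,r)$ geometrically: the reflection $r_{\beta_i}$ by the factor-swap automorphism of $Y$ (accompanied by swapping $D_i^v$ and $D_{i+1}^v$); the reflection $r_{\alpha_j}$ for $j \ge 1$ by relabeling of the blown-up points; and the Cremona-type reflection $r_{I,k}$ (with $|I|=r$, $1 \le k < p$) by the birational involution of $Y$ that is the identity on all factors $\neq k$ and acts as the standard Cremona involution on the $k$-th factor based at the $r$ points $\{\pi_k(P_i) : i \in I\}$. The cohomological pullback of this last map matches the formulas displayed in \ref{Weyl} for $r_{I,k}$; the essential geometric input is that the Cremona involution of $\PP^{r-1}$ based at $r$ points on a rational normal cuspidal curve of degree $r$ preserves the curve and acts on it by an explicit translation in $\Pic^0(C)$. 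Given $w \in W$ with $w(v) = \lambda v$, factor $w$ into such reflections $w = s_N \cdots s_1$ and chain the corresponding birational maps; by the generalized Lemma \ref{w-v}, the tuple obtained after $s_k^{-1} \circ \cdots \circ s_1^{-1}$ is $(D^{(s_1 \cdots s_k)(v)}, c^{(s_1 \cdots s_k)(v)})$, and by Lemma \ref{equiv} together with $w(v) = \lambda v$ the final tuple is isomorphic (via $\Aut(C)$ acting as scalar multiplication on $\Pic^0(C) \cong \C$, lifted componentwise to $Y$) to the starting one. This closes the loop, producing a birational self-map $f_w : X \dashrightarrow X$ with $(f_w)^* | H^2(X,\Z) = w$.

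The pseudo-automorphism property reduces to showing that no divisor is contracted: the class of any contracted divisor would produce a nonzero element of $\Lambda_n$ whose image under $\rho^v$ vanishes, contradicting $(D^v, c^v) \in U_C$. The first-dynamical-degree claim then follows because for pseudo-automorphisms $d_1(f_w) = \rho((f_w)^* | H^{1,1})$, which equals $\rho(w)$; hyperbolicity of $L_n$ (signature $(1,n-1)$) combined with $|\lambda| > 1$ forces $\lambda = \rho(w) > 1$ and $\lambda \in \R_{>0}$ via the Salem-polynomial structure of $\det(xI - w)$ recorded in \ref{lead}. The main obstacle I expect is the Cremona step: proving that the Cremona involution on the $k$-th factor genuinely preserves the embedded cuspidal curve and that its effect on the marking matches the lattice-level formula for $r_{I,k}$. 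This amounts to matching two computations of a translation in $\Pic^0(C)$---one from the chord-and-tangent geometry of the Cremona map, one from the lattice action---and then combining them compatibly with the product structure on $Y$ so that the composition of reflections never spuriously contracts a divisor.
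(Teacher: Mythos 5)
Your outline for $p=2$ matches the paper's proof essentially step for step (generators of $W$ realized by point-permuting automorphisms and the standard Cremona involution, the marked-curve bookkeeping of Lemmas \ref{w-v} and \ref{equiv} closing the loop when $w(v)=\lambda v$, and the Salem-polynomial argument for $d_1$). But your realization of the Cremona reflection for $p\ge 3$ is wrong, and this is exactly the step on which the whole extension hinges. You propose to realize $r_{I,k}$ by the birational involution of $Y=(\PP^{r-1})^{p-1}$ that is the standard Cremona on the $k$-th factor and the \emph{identity} on all other factors, and you assert that its cohomological pullback matches the formulas of \ref{Weyl}. It does not: since $h_i\cdot h_k=r-1\neq 0$ for $i\neq k$, one has $h_i\cdot\alpha_{I,k}=r-1$, so the reflection moves \emph{every} $h_i$, namely $r_{I,k}(h_i)=(r-1)h_k+h_i-(r-1)\sum_{j\in I}e_j$; your factor-wise map fixes the classes $h_i$ ($i\neq k$), so it induces the wrong isometry of $\Lambda_n$. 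Worse, it does not lift to a pseudo-isomorphism at all: it contracts each divisor $\pi_k^{-1}(H_j)$ ($j\in I$, where $H_j$ is the hyperplane in the $k$-th factor spanned by the other $r-1$ base points) onto a locus of the form $\{\text{point}\}\times(\PP^{r-1})^{p-2}$, of codimension $r-1\ge 2$ in $Y$, and blowing up the finitely many points $P_1,\dots,P_{q+r}$ cannot repair a divisor contracted onto a positive-dimensional center.

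The paper's fix (Proposition \ref{Cal2}, following Mukai \cite{Mu2}) is to use a \emph{twisted} Cremona involution acting on all factors simultaneously, with $P_1,\dots,P_r$ normalized to the diagonal images of the coordinate points:
$$
\Psi\bigl([X_1:\cdots:X_r],[Y_1:\cdots:Y_r],\dots\bigr)=\Bigl(\bigl[\tfrac{1}{X_1}:\cdots:\tfrac{1}{X_r}\bigr],\bigl[\tfrac{Y_1}{X_1}:\cdots:\tfrac{Y_r}{X_r}\bigr],\dots\Bigr),
$$
whose lift satisfies $f_w^*h_i=(r-1)h_1+h_i-(r-1)\sum_{j=1}^r[E_j]$ for $i\neq 1$, matching $r_{I,1}$ exactly, and which collapses each hyperplane $H_{1j}$ to a single smooth point on the image of $C$ (via the degree count $\deg(H_{1j}'\,|\,C_X)=1$), so that the lift between the two blowups is genuinely a pseudo-isomorphism. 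A secondary remark: your closing argument for the pseudo-automorphism property (``a contracted divisor would give a nonzero class killed by $\rho^v$'') is both unsubstantiated and unnecessary --- once each generator is realized by a pseudo-isomorphism $X_{(D;c)}\dashrightarrow X_{w(D;c)}$, a composition of pseudo-isomorphisms is a pseudo-isomorphism, which is how the paper proceeds; the hypothesis $(D^v,c^v)\in U_C$ is used only to guarantee the general position of the points (Remark \ref{non-deg}) and the $\Aut(C)$-identification of the initial and final tuples.
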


\section{Proof of Theorems when $p = 2$}\label{p=2}

We will frequently use the following result.

\begin{lemma}\label{RR}
Let $C$ be the cuspidal curve of arithmetic genus $1$
and let $D$ be a Cartier divisor on $C$ of degree $r$.
\begin{itemize}
\item [(1)]
If $r = 1$, then there is a unique smooth point $P$ of $C$
such that $P \sim D$ $($linear equivalence$)$.
\item[(2)]
If $r = \deg(D) \ge 3$, then the complete linear system
$|D|$ is base point free and defines an
embedding $\Phi_{|D|} : C \to \PP^{r-1}$.
\end{itemize}
\end{lemma}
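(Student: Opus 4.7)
My plan is to apply Riemann--Roch on the Gorenstein curve $C$, using that $p_a(C) = 1$ and that the dualizing sheaf $\omega_C \cong \OO_C$ by adjunction for the plane cubic. For any Cartier divisor $D$ of degree $d \ge 1$, Serre duality together with negative-degree vanishing yield $h^1(D) = h^0(-D) = 0$, hence $h^0(D) = d$.

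For part (1), $h^0(D) = 1$ gives a unique effective $D' \sim D$, which is necessarily a single closed point $P$. Since $D'$ is Cartier (being linearly equivalent to the Cartier divisor $D$) while the reduced singular point $Q$ is not Cartier (its local ideal $\mathfrak{m}_Q$ is not principal), the point $P$ must lie in the smooth locus, and uniqueness is immediate from $h^0(D) = 1$.

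For part (2), base-point-freeness at a smooth point $P$ is the standard Riemann--Roch computation $h^0(D-P) = r-1 < r = h^0(D)$. At the cusp $Q$, I use the decomposition $\Pic(C) = \Pic^0(C) \oplus \Z \cdot [P_0]$ with $\Pic^0(C) \cong \mathbb{G}_a$ (via Abel--Jacobi on the smooth locus): every degree-$r$ line bundle has an effective representative of the form $(r-1)P_0 + P_1$ supported on smooth points, which produces a section of $\OO_C(D)$ nonzero at $Q$. Separation of pairs of smooth points and of tangent vectors at a smooth point follow from $h^0(D - P_1 - P_2) = r - 2$ and $h^0(D - 2P) = r - 2$ respectively (requiring $r \ge 3$ for the vanishing of $h^1$); separation of $Q$ from a smooth $P$ combines both ideas applied to $\OO_C(D-P)$.

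The delicate step, which I expect to be the main obstacle, is showing that $\Phi_{|D|}$ is an immersion at the cusp $Q$, where the Zariski tangent space is two-dimensional since $\mathfrak{m}_Q / \mathfrak{m}_Q^2$ has basis given by $x,y$ in the local model $y^2 = x^3$. This reduces to verifying $h^0(\mathfrak{m}_Q^2 \cdot \OO_C(D)) = r - 3$, i.e., that $H^0(\OO_C(D))$ surjects onto the three-dimensional quotient $\OO_C(D)_Q / \mathfrak{m}_Q^2 \OO_C(D)_Q$. I would carry this out via Riemann--Roch for rank-one torsion-free sheaves on $C$: the Euler characteristic equals $r-3$ (since $\OO_C/\mathfrak{m}_Q^2$ has length $3$), and Serre duality identifies $h^1$ with $\dim \Hom(\mathfrak{m}_Q^2, \OO_C(-D))$, which vanishes for $r \ge 3$ because any nonzero map between rank-one torsion-free sheaves would be injective and thus force either a degree contradiction or an isomorphism $\mathfrak{m}_Q^2 \cong \OO_C(-D)$, impossible since $\mathfrak{m}_Q^2$ is not locally free.
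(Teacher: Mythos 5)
Your proposal is correct and takes essentially the same route as the paper: Riemann--Roch for singular projective curves plus Serre duality (with $\omega_C \cong \OO_C$) giving $h^0(C,D) = \deg D$, after which the paper simply asserts ``the results follow,'' citing Hartshorne II, Example 6.11.4 for part (1). Your extra work --- deducing smoothness of $P$ in (1) from the non-principality of $\mathfrak{m}_Q$, and verifying the immersion condition at the cusp via $\chi(\mathfrak{m}_Q^2\OO_C(D)) = r-3$ and duality for the rank-one torsion-free sheaf $\mathfrak{m}_Q^2$ --- is a correct and complete filling-in of exactly the details the paper leaves to the reader.
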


\begin{proof}
By the Riemann-Roch theorem (true for all projective curves as in Hartshorne's book, Ch IV, Ex 1.9) and Serre duality
for Cohen-Macaulay projective variety,
we have $h^0(C, \mathcal{O}_C(D)) = r$. The result follows.
Indeed, the second part of (1) is worked out in Hartshorne's book, Ch II, Example 6.11.4.
\end{proof}

We now prove Theorem \ref{ThB} when $p = 2$.
In the definition of the lattice $\Lambda_n$ and $L_n$, we set $p = 2$ and $e_0 = h_1$.
Let $(D;c)\in U_C$ and
consider the embedding
$$\Phi_{|D|} : C \to \PP^{r-1}$$
given by the base-point free
complete linear system $|D|$. Set $P_i := \Phi_{|D|}(c_i)$.
Let
$$\pi_{(D; c)} : X = X_{(D; c)} \to \PP^{r-1}$$ be the blowup of the $n$ points $P_i$
with $E_i = \pi_{(D; c)}^{-1}(P_i)$. For any $w \in W$, set $(D';c'):=w(D;c)$ and
define similarly $\Phi_{|D'|}$, $P_i'$, $\pi_{(D'; c')} : X' = X_{(D'; c')} \to \PP^{r-1}$, $E_i'$.

The result below should be well known
but we work it out since we need to extend it
to the case $p > 2$ in Section \ref{p>2};
see \cite[VI, Proposition 1, page 86]{DO}.
Our statement also incorporates the marking on the curve $C$ embedded in $\PP^{r-1}$.

\begin{proposition}\label{Cal}
Let $p = 2$.
Let $w\in W$ and $(D;c)\in U_C$. Define $(D';c'):=w(D;c)$. Consider the blowups
$$
\pi\colon X_{(D;c)}\to  \PP^{r-1}\, , \quad \pi' \colon X_{(D';c')}\to \PP^{r-1}
$$
at the points $P_i=\Phi_{|D|}(c_i)$ $($resp. $P_i'=\Phi_{|D'|}(c_i'))$.

Then there exists a pseudo-isomorphism $f_w\colon X_{(D;c)} \dashrightarrow X_{(D';c')}$ such that
$$
f_w^*\colon H^2(X_{(D';c')},\Z)\to H^2(X_{(D;c)},\Z)
$$
coincides with $w$ after the identifications $[E_j']=e_j=[E_j]$, $j\geq 1$, $\pi'^*[H]=e_0=\pi^*[H]$.
Here $H$ is the hyperplane of $\PP^{r-1}$
and $E_i$ $($resp. $E_i')$ is the exceptional divisor over $P_i$ $($resp. $P_i')$.
\end{proposition}

\begin{proof}
Since $W$ is generated by the transpositions $r_{e_i - e_j}$ and the Cremona involution
$r_{I, 1}$, we need to prove the result only when $w$ is one of them.

Our proof is top down: first construct a pseudo-isomorphism $X = X_{(D; c)} \dashrightarrow X'$ and then show that
$X'$ equals $X_{(D'; c')}$, the blowup of $\PP^{r-1}$ at the $n$ points $\Phi_{|D'|}(c_i')$.

Consider first the case $w = r_{I, 1}$ with $I = \{1,2, \dots, r\}$.
Let
$$X_P = X_{P_1, \dots, P_r} \to \PP^{r-1}$$
be the blowup of the $r$ points $P_i$ ($1 \le i \le r$).
Since these $r$ points $P_i$ span the whole space (cf. Remark \ref{non-deg}), we can take
the standard Cremona involution
$$\Psi_P = \Psi_{P_1, \dots, P_r} :{\PP}^{r-1}  \dashrightarrow {\PP}^{r-1} .$$
$\Psi_P$ is given by the linear system
$$|\OO_{\PP^{r-1}}(r-1) - (r-2)\sum_{i=1}^r P_i| .$$
A basis of this linear system is:
$$\sum_{j\not=i}H_j, \,\,\,\, i\in \{ 1, \dots , r\}$$
where
$H_i$ is the hyperplane passing through $r-1$ points $\{P_1, \dots, P_{i-1}, P_{i+1}, \dots, P_r\}$.
The base locus of the linear system (the place where $\Psi_P$ is not defined)
is the union of $H_i \cap H_j$ ($1 \le i < j \le r$).
Using new coordinate system so that $P_1 = [1: 0: \dots : 0],
\dots, P_r = [0: \dots : 0 : 1]$, our $\Psi_P$ is given by
$$\Psi_P : [X_1: \dots : X_r] \,\, \to \,\, [\frac{1}{X_1}: \dots : \frac{1}{X_r}].$$
Let $E_i \subset X_P$ be the inverse image of $P_i$ and $E_0 \subset X_P$ the total transform of a hyperplane of $\PP^{r-1}$.
Then it is known that $\Psi_P$ lifts to an involutive pseudo-automorphism
$$\widetilde{\Psi_P} : X_P \to X_P$$
exchanging $E_i$ with the proper transform $H_i' \subset X_P$ of $H_i$ (cf. \cite{DO}).
This means that
$$\widetilde{\Psi_P}^*E_i = H_i' \sim E_0 - \sum_{j \ne i} E_j
\,\,\,\, \text{(linear equivalence)}$$
Denote by $e_i = [E_i] \in H^2(X_P, \Z)$.
Then
$$\widetilde{\Psi_P}^*e_i = [\widetilde{\Psi_P}^*E_i] = [E_0 - \sum_{j \ne i} E_j] =
e_0 - \sum_{j \ne i} e_j = e_i + (e_0 - \sum_{i=1}^r e_i) = w(e_i).$$
By the definition of the Cremona involution in terms of the linear system,
$$\widetilde{\Psi_P}^* E_0 = (r-1)E_0 - (r-2)\sum_{i=1}^r E_i$$
and hence
$\widetilde{\Psi_P}^*e_0 = w(e_0)$.

The blowup $X_P \to \PP^{r-1}$ is centered at $r$ smooth points $P_i = \Phi_{|D|}(c_i)$,
and hence gives an isomorphism between the proper transform $C_{X} \subset X_P$ of $C$ and $C$.
Since $C = \Phi_{|D|}(C) \subset \PP^{r-1}$ is a non-degenerate curve,
it is not contained in any hyperplane $H_i$.
Hence $C_X$ is not contained in $H_i'$.
Now
$$\deg(H_i' | C_{X}) = \deg(E_0 - \sum_{j \ne i} E_j) | C_{X}) = \deg(\OO_{\PP^{r-1}}(1) | C) - (r-1) = 1$$
since $C = \Phi_{|D|}(C)$ is a curve of degree $r$ in $\PP^{r-1}$.
Thus $C_X$ meets $H_i'$ only at one point and transversally.
Since the Cremona involution $\Psi_P : \PP^{r-1} \dasharrow \PP^{r-1}$
blows up $r$ smooth points $P_i$ on $C$ and collapses $H_i'$ to a point called $P_i'$ in the codomain $\PP^{r-1}$,
it maps $C \subset \PP^{r-1}$ isomorphically to a curve $C'$ in the codomain $\PP^{r-1}$.
As sets, we have $\{P_i'\} = \{P_i\}$. This $C'$ is also the isomorphic image of $C_X \subset X_P$
via the map
$$X_P \overset{\widetilde{\Psi_P}}{\dashrightarrow} X_P \to \PP^{r-1} .$$
This isomorphism of curves factors as
$$C_X \overset{\widetilde{\Psi_P}}{\to} C'_X \to C' .$$

Let us calculate the very ample divisor $D' = \OO_{\PP^{r-1}}(1) | C'$ giving rise to the
embedding $C' \subset \PP^{r-1}$. By the above identification $C_X = C'_X = C'$ and further the identification
$C_X = \Phi_{|D|}(C) = C$, we have
$$D' = E_0 | C'_X = \widetilde{\Psi_P}^* E_0 | C_X =
((r-1)E_0 - (r-2)\sum_{i=1}^r E_i) | C_X = (r-1)D - (r-2) \sum_{i=1}^r c_i = w(D)$$
(cf. Definition \ref{wpair}).
Let $c_i' \in C'$ be the preimage of the point $P_i' \in \PP^{r-1}$ via the embedding $\Phi_{|D'|} : C' \to \PP^{r-1}$.
Under the same identification $C = \Phi_{|D|}(C) = C_X = C'_X = C'$,
we have (cf. Lemma \ref{RR}):
$$C = C' \ni c_i' = P_i' = H_i' \, | \, C_X \sim (E_0 - \sum_{j \ne i} E_j) | C_X =
D - \sum_{j \ne i}c_j = w(c_i).$$

For $r+1 \le j \le n$, the point
$P_j$ is not contained in the indeterminacy set: the union of $H_i \cap H_j$,
otherwise, the $r$ points $P_1, \dots, P_{i-1}, P_j, P_{i+1}, \dots, P_r$ are contained in the hyperplane
$H_i$, contradicting Remark \ref{non-deg}.
Let $Q_j$ ($r+1 \le j \le n$) be the $\Psi_P$-image of $P_j$.
For $1 \le i \le r$, set $Q_i = P_i$.
Let
$$\pi_{(D; c)} : X = X_{(D; c)} \to \PP^{r-1}$$
be the blowup of the $n$ points $P_i$,
$E_0 \subset X$ the pullback of the hyperplane of $\PP^{r-1}$,
$E_i = \pi_{(D; c)}^{-1}(P_i)$ ($i \ge 1$), and $e_i$ ($i \ge 0$) the cohomology class of $E_i$ in $H^2(X, \Z)$.
Let
$$\pi': X' \to \PP^{r-1}$$
be the blowup of the $n$ points $Q_i$,
$E_0' \subset X'$ the pullback of the hyperplane of $\PP^{r-1}$,
$E_i' = (\pi')^{-1}(Q_i)$ ($i \ge 1$), and $e_i'$ ($i \ge 0$) the cohomology class of $E_i'$ in $H^2(X', \Z)$.
Then $\widetilde{\Psi_P}$ lifts to a pseudo-isomorphism
$$f_w : X \to X' .$$
Identify $H^2(X_P, \Z)$ with its embedded image (via pullback) in $H^2(X, \Z)$.
By the calculation above and the construction, we have $f_w^* e_i' = w(e_i)$ for all $i \le r$
and $f_w^*(E_j') = E_j$ ($j > r$) (so $f_w^*(e_j') = e_j = w(e_j)$), if we identify $H^2(X', \Z) = H^2(X, \Z)$
by letting $e_i = e_i'$ ($i \ge 0$); thus $f_w^* = w$.

By the argument above, if we set $(D'; c') = w(D; c)$, then the above $\pi' : X' \to \PP^{r-1}$ is just the blowup
of $n$ points $P_i' = \Phi_{|D'|}(c_i')$ on the curve $C' = \Phi_{|D'|}(C') \subset \PP^{r-1}$,
i.e., it is $\pi_{(D'; c')}$.
This proves Proposition \ref{Cal} when $w$ is a Cremona involution.

\par \vskip 1pc
Next, consider the case where $w = r_{e_a - e_b}$ is a transposition of the basis elements $e_a$ and $e_b$
and fixing the others.
Take an automorphism $\sigma$ of $\PP^{r-1}$ interchanging two points $P_a$ and $P_b$.
Let $C' = \sigma(C) \subset \sigma(\PP^{r-1}) = \PP^{r-1}$.
Set $P_a' = P_a$, $P_b' = P_b$ and $P_j' = \sigma(P_j)$ ($j \ne a, b$).
Let
$$X' \to \PP^{r-1}$$
be the blowup of the $n$ points $P_i'$
with $E_i'$ the inverse of $P_i'$.
Then $\sigma$ lifts to an isomorphism
$$f_w: X \to X' .$$
We see that $f_w^* = w$ if we identify $H^2(X', \Z) = H^2(X, \Z)$ by letting $[E_i'] = e_i = [E_i]$ as above.
Define $(D'; c')$ so that $D' = D$, $c_a' = c_a, c_b' = c_b$ and $c_j' = \sigma(c_j)$ ($j \ne a, b$).
Using the identification $C = C_X = C_X' = C'$ as above, we obtain
$(D'; c') = w(D; c)$. This implies Proposition \ref{Cal} as in the previous case.
\end{proof}

\begin{setup}
{\bf Proof of Theorem \ref{ThB} when $p = 2$}
\end{setup}

Given $v$ as in Theorem \ref{ThB}, we define $(D^v; c^v)$ as in \ref{resp} (cf.~Lemma \ref{rhoint}).
Set $(D; c) = (D^v; c^v)$.
Then we get the pseudo-isomorphism
$$f_w : X = X_{(D; c)} \to X' = X_{(D'; c')}$$
as in Proposition \ref{Cal}
with $(D'; c') = w(D; c)$ and $f_w^* = w$ on $H^2(X', \Z)$ identified with $H^2(X, \Z)$
by letting $[E_i'] = [E_i]$ and $[(\pi')^*H'] = [\pi^*H]$.
By Lemmas \ref{w-v} and \ref{equiv},
$$(D'; c') = w(D^v; c^v) = (D^{w^{-1}(v)}; c^{w^{-1}(v)}) = (D^{\lambda^{-1}v}; c^{\lambda^{-1}v})
= (D^v; c^v) = (D; c)$$
(up to the action of $\Aut(C)$).
Thus we
may identify the $\pi_{(D'; c')} : X_{(D'; c')} \to \PP^{r-1}$ in Proposition \ref{Cal}
with $\pi_{(D; c)} : X = X_{(D; c)} \to \PP^{r-1}$ so that $f_w$ is a pseudo-automorphism.
This proves Theorem \ref{ThB}.
Indeed, for the final part (when $|\lambda| > 1$),
the Coxeter system is hyperbolic, so $\lambda$ is the largest root of a Salem polynomial
and also the spectral radius $\rho(w)$ of $w$ (cf. \cite[Proposition 7.1]{Mc02}). Thus
$$d_1(f_w) = \rho(f_w^* | H^2(X, \Z)) = \rho(w) = \lambda$$
by the definition of $d_1(f_w)$ (cf. \cite{DS}).

\begin{setup}
{\bf Proof of Theorem \ref{ThA} when $p = 2$}
\end{setup}
Theorem \ref{ThA}  follows from Proposition \ref{McTh} (1), Theorem \ref{ThB} and its proof,
by taking $\lambda$ in Proposition \ref{McTh} to be the spectral radius of $w$;
see also \ref{lead} and Remark \ref{pqr}.

\begin{setup}\label{constr}
{\rm Concrete construction of $f_w$ on $X_{2, q, r}$ as in Theorem \ref{ThA}}
\end{setup}

We first construct a pseudo-automorphism $f$ such that $f_* = w$ where $w = (12 \cdots n) r_{I, 1}$
is a Coxeter element of the root system $L_n$ of type $T_{2, n-r, r}$ (cf. Definition \ref{cox}).
Then $f_w = f^{-1}$ meets the requirement. To do so,
take an eigenvector $v$ of $w$ such that $w(v) = \lambda v$ and $\lambda$
is the spectral radius of $w \in O(L_n)$ (which turns out
to be $d_1(f_w)$, since $f_{w}^* = w$).

Define the $(n+1)$-tuple $(D; c) = (D^v; c^v)$ as in \ref{resp}.
Let $P_i = \Phi_{|D|}(c_i) \in \Phi_{|D|}(C) \subset \PP^{r-1}$.
Choose a new coordinate system of $\PP^{r-1}$ such that $P_1 = [1 : 0 :  \dots : 0], \dots, P_r = [0 : \dots : 0 : 1]$.
Consider the standard Cremona involution:
$$\gamma : \PP^{r-1} \to \PP^{r-1}, \,\, [X_1 : \dots : X_r] \mapsto [\frac{1}{X_1} : \dots : \frac{1}{X_r}] .$$

Let
$$\pi = \pi_{(D; c)} : X = X_{(D; c)} \to \PP^{r-1}$$
be the blowup at the $n$ points $P_i$
and let $E_i = \pi^{-1}(P_i)$ and $E_0 \subset X$ the total transform of a hyperplane of $\PP^{r-1}$.
Then by the proof of Proposition \ref{Cal} and Theorem \ref{ThB},
there is a projective automorphism $g$ of $\PP^{r-1}$
such that
$g \circ \gamma$ lifts to a pseudo-isomorphism
$$f = f_{(12 \cdots n)} \circ f_{r_{I,1}}: X \to X'$$
where
$f_{(12 \cdots n)}$ is the lifting of $g$ and so is an isomorphism. Moreover,
$$f_* = (f_{(12 \cdots n)})_* (f_{r_{I,1}})_* = w$$
on $H^2(X, \Z)$ identified with $H^2(X', \Z)$ by letting $[E_i] = e_i = [E_i']$.
Recall that $X'=X_{(D';c')}$ is the blowup at $n$ points $P_i' = \Phi_{|D'|}(c_i') \in \PP^{r-1}$ and since $v$ is an eigenvector
we have
$$X' = X_{(D'; c')} = X_{w(D^v; c^v)} = X_{(D^{w^{-1}(v)}; c^{w^{-1}(v)})} = X_{(D^{\lambda^{-1}v}; c^{\lambda^{-1}v})}
= X_{(D^v; c^v)} = X$$
(up to isomorphism).
Now the identification $(D'; c') = w(D; c)$ with $(D; c)$ and the fact that
$w(c_i) = c_{i+1}$ (mod $n$) for $i > r$ force $(g \circ \gamma)(P_i) = P_{i+1}$ (mod $n$).
Conversely, if we can find $g$ as above
then we can forget about the eigenvector $v$ or so, and straightaway say that $(g \circ \gamma)^{-1}$ lifts to
a pseudo-automorphism $f_w$ on the blowup $X \to \PP^{r-1}$ at the $n$ points $P_i$
which satisfies the conclusion of Theorem \ref{ThA}.

\begin{remark}\label{lift}
When $p = 2$, our $f_w$ in Theorem \ref{ThA} lifts to an isomorphism.
Indeed, by the construction in \ref{constr},
it is enough to lift $f = f_{(12 \cdots n)} \circ f_{r_{I,1}} : X \dashrightarrow X$ to an isomorphism.
By \cite[VI, Lemma 1]{DO}, there is a  further blowup $\sigma: X_1 \to X$ and a blowup $X_2\to \PP^{r-1}$
such that $f_{r_{I,1}}$ lifts to an isomorphism
$f_1 : X_1 \to X_2$. We can take a corresponding blowup $X_3 \to X$ of the images of the centers
lying below the exceptional divisors of $X_2\to \PP^{r-1}$
to lift the isomorphism $f_{(12 \cdots n)}$ to an isomorphism $f_2 : X_2 \to X_3$.
Now the isomorphism $f_3 = f_2 \circ f_1 : X_1 \to X_3$
(resp. $f_4 = f_3^{-1}$) is a lifting of $f$ (resp. $f^{-1} = f_w$).
\end{remark}

\medskip

\begin{setup}
{\bf On the geometry of $X_{2,q,4}$ with $q \ge 5$}
\end{setup}

In this subsection, we prove the following:

\begin{proposition}\label{X254}
Let $w=w_{2,q,4}$ $(q \ge 5)$. Let $f_w$ be the pseudo-automorphism of $X:=X_{2,q,4}$ in Theorem $\ref{ThA}$
and $C_X\subset X$ the proper transform of $C_D:=\Phi_{|D|}(C)\subset \PP^3$.
Then:
\begin{itemize}
\item[(1)]
$f_w$  stabilizes the cuspidal curve $C_X$ and permutes
members $F_t'$ of the rational pencil $|{-}K_X/2|$ each of which is a strict transform
of an {\it irreducible} quadric hypersurface $F_t \subset \PP^3$ with $F_t' \cap F_{t'}' = C_X$ $(t \ne t')$.
Moreover, all the quadrics $F_t$, except two: $F_i$ $(i = 1, 2)$, are smooth.
\item[(2)]
$f_w$ stabilizes the blowup $F_1'$ of the quadric cone $F_1$ whose vertex $P$ is the cusp of $C_D$.
When $q = 5$, every effective divisor $E$ with the class $[E]$ fixed by $f_w^*$ is a union of members in $|{-}K_X/2|$.
\item[(3)]
$S := F_1'$ $($the $(q+4)$-point blowup of the quadric $F_1)$
is disjoint from the indeterminacy of $f_w$.
The restriction
$f_S := f_w \, | \, S$ is a well defined automorphism of $S$
with $d_1(f_S) = d_1(f_w) > 1$.
\end{itemize}
\end{proposition}

\begin{proof}
By the proof, $f_w(C_X) = C_X$ holds in Theorem \ref{ThA} for any $(p, q, r)$.
Since $C_D$ has arithmetic genus $1$ and degree $4$, it is contained in a linear system
$|\mathcal{I}(2)|$ of quadrics of dimension $\geq 1$.
This follows from the long cohomology sequence associated to
$$
0\to \mathcal{I}(2) \to \mathcal{O}_{\PP^3}(2) \to \mathcal{O}_{C_D}(2)\to 0\, .
$$
Alternatively, we may assume that $C_D = \{ (1,z,z^2,z^4) \, : \, z \in \C \}$ in new coordinates.
By a direct calculation, $|\mathcal{I}(2)|$ is a pencil spanned by
$$F_1 := \{ X_2^2 = X_1X_3\}, \,\,\,\, F_2 := \{X_3^2 = X_1X_4\} ,$$
every member $F_t\not= F_i$ ($i = 1, 2$) is smooth,
and $(\Sing(F_i)) \cap (C_D \setminus \Sing(C_D)) = \emptyset$.

Let
$$\pi : X \to \PP^3$$
be the blowup at the $q+4$ points $P_i$ as in Theorem \ref{ThA}
with $E_i = \pi^{-1}(P_i)$ and $E_0 \subset X$ the total transform of a hyperplane of $\PP^3$.
For $F\in |\mathcal{I}(2)|$, the proper transform $F'$ of $F$
satisfies
$$F' \sim 2E_0 - \sum_{i=1}^{q+4} E_i \,\,\, \text{(linear equivalence)} ,$$
so
$$-K_X = -(\pi^*K_{\PP^3} + 2\sum_{i=1}^{q+4} E_i) \sim 2F' .$$
Since $-K_X$ is preserved by $f_w$, we have $2(f_w^*F' - F') \sim 0$, so
$f_w^*F' - F' \sim 0$, because the rational manifold $X$ is simply connected and hence
cohomologous divisors are just linear equivalent divisors.

$F'$, or equivalent $F = \pi(F')$, is irreducible. Otherwise,
$F = L_1 \cup L_2$ with two hyperplanes $L_i$. Since all $(q+4)$ points $P_i \in C_D$ belong to $F$,
we may assume that $L_1$ contains $5$ of $P_i$. This contradicts Remark \ref{non-deg}
(cf. Proposition \ref{McTh}).
For two distinct such $F$, say $F_t$, $F_{t'}$,
the intersection $F_t \cap F_{t'}$ includes $C_D$ and hence equals $C_D$ by comparing the degree.
This proves (1).

If $E$ is a divisor whose class $[E]$ is fixed by $f_w^*$ (e.g., $E = a F'$), then
either $\dim |E| \le 0$ or $|E|$ is composed of a pencil, otherwise, $f_w$
would descend to a surface or threefold automorphism of the first dynamical degree equal to $1$
via a fibration with general fiber a curve or a point, contradicting the fact that $d_1(f_w) > 1$
(cf. \cite{DN}).
In particular, $|aF'|$ ($a > 0$) is composed of a pencil (necessarily parametrized by a curve $B \cong \PP^1$
because the irregularity $q(B) \le q(X) = 0$) stabilized by $f_w$.
The induced action of $f_w$ on $B \cong \PP^1$ has at least one fixed point. Namely,
at least one $F'_0 \in |F'|$ is $f_w$-stable.

When $q = 5$, the characteristic polynomial of $f_w^* | H^2(X, \Z)$ has the form
$$\phi_8(x) (x+1)(x-1)$$
(cf. \cite[Table 5]{Mc02}), where $x-1$ corresponds to the $f_w$-invariant class $\kappa = [-K_X] = 2[F']$.
If $E$ is an integral divisor with $f_w^*[E] = [E]$ then $b E \sim a F'$ for some coprime integers $a, b$.
Since
$$[F'] \cdot [F'] = (\kappa)^2/4 = 4-q = -1$$
we get $b = \pm 1$. In particular, every effective
divisor $E$
with $[E]$ fixed by $f_w^*$ is a member of the pencil $|aF'|$ and hence equal to a union of
$F_t' \in |F'|$ by the Stein factorization.

As in \ref{lift} or \cite{DO}, $f_w: X \dashrightarrow X$ (and $f_{r_{I, 1}}$)
is well-defined outside the proper transforms $H_{ij}' := H_i' \cap H_j'$ of the lines $H_{ij} := H_i \cap H_j$
($1 \le i < j \le 4$) (cf. the notation of Proposition \ref{Cal}).
Our $F_1' \in |F'|$ has the characteristic property as being the only singular
member in $|F'|$ whose singular point $\pi^{-1}(P)$
is the cusp of $C_X$ ($P$ being the vertex of $F_1$).
Since
$$H_i' . H_j' . F_t' = (E_0 - \sum_{i \ne \ell=1}^4 E_{\ell}) .
(E_0 - \sum_{j \ne \ell=1}^4 E_{\ell}) . (2E_0 - \sum_{\ell=1}^{q+4} E_{\ell})
= 2 (E_0^3) - \sum_{i, j \ne \ell=1}^4 (E_{\ell})^3 = 2 - 2 = 0$$
$H_{ij}'$ is either contained in $F_t'$ or disjoint from $F_t'$.
If $H_{ij}'$ is contained in $F_1'$, then the line
$H_{ij}$ is contained in the cone $F_1$ and passes through its vertex $P$, and
$H_{i}$ intersects
the non-degenerate curve $C_D$ at its cusp $P$ and three points $P_j$ ($j \ne i, 1 \le j \le 4$),
hence
$$4 = \deg(C_D) = C_D . H_i \ge 2 + 3 ,$$ a contradiction.
Thus no $H_{ij}'$ intersects $F_1'$.
Our $f_w$ is well defined at $S := F_1'$, and
$f_w(F_1') \in |F'|$ satisfies the same characteristic property as $F_1'$
and hence equals $F_1'$.
The isomorphism $f_4 : X_3 \to X_1$ in Remark \ref{lift} (lifting $f_w$)
is just $f_w$ around $S$ and hence $f_S = f_w | S$ is an automorphism of $S$.
This proves (2).

Using the lifting $f_4$ of $f_w$, we have
$$f_S^* (L_{f_w} | S) = d_1(f_w) L_{f_w} | S$$
where $L_{f_w}$ is the eigenvector of
$f_w^* | H^2(X, \Z) = w$ corresponding to the eigenvalue $d_1(f_w) = \rho(w)$.
To prove (3), we only need to show $L_{f_w} | F_t' \ne 0$.
To do so, write
$$L_{f_w} = v = \sum_{i=0}^{q+4} v_i e_i$$
as in \ref{resp}.
Then
$$
(L_{f_w} | F_t') . (E_i | F_t') = L_{f_w} . E_i . F_t' = (\sum_{j=0}^{q+4} v_j E_j) . E_i . (2E_0 - \sum_{j=1}^{q+4} E_j)
= - v_i (E_i)^3 = - v_i \, .
$$
Since $e_0$ is not an eigenvector of $w_{2,q,4}$, it follows that $L_{f_w} | F_t' \ne 0$.
\end{proof}

\begin{remark}
The Salem number $ 1.28064 \dots$ is also realized in
\cite{Mc07} as $d_1(f_{13})$ of an automorphism $f_{13}$ on the blowup $X_{13}$ of $\PP^2$ at $13$ points on a cubic curve.
The map $f_S$ in Proposition \ref{X254} (3) with $q = 5$ is not the descent of $f_{13}$ because the characteristic polynomial
of $f_{13}^* | H^2(X_{13}, \Z)$ is of the form $\phi_8(x)(x^4+1)(x^2-1)$.
Since $W(2,5,4)$ can be embedded in $W(2, 7, 3)$,
our Proposition \ref{X254} (3) with $q = 5$ is compatible with \cite{Mc07}.
\end{remark}

As a consequence of Theorem \ref{ThA} and Proposition \ref{X254}, we have:
\begin{corollary}\label{h254}
Let $f_w\colon X_{2,q,4}  \dashrightarrow X_{2,q,4}$ be
as in Theorem $\ref{ThA}$ with $q \ge 5$ and let $S = F_1'$ be as in Proposition $\ref{X254}$.
Then the topological entropy $h(f_w)$ of $f_w$ satisfies:
$$
h(f_w) = h(f_S) = \log d_1(f_S) = \log d_1(f_w) > 0.
$$
\end{corollary}

\begin{proof}
By the Poincar\'e duality and noting that $d_1(f_w)$ is a Salem number,
we have $d_1(f_w) = d_2(f_w)$.
Thus
$$\log d_1(f_w) \ge h(f_w) \ge h(f_S) = \log d_1(f_S) = \log d_1(f_w)$$
(cf. \cite{DS}, \cite{G}, \cite{Yo}, and taking an equivariant resolution of $S$), and we are done.
\end{proof}

\section{Proof of Theorems for all $p \ge 2$}\label{p>2}

We now prove Theorem \ref{ThB} for $p \ge 3$.
Let $w \in W$.
Let $(D; c) \in U_C$. Denote by $(D'; c') = w(D; c)$.
Consider the embedding:
$$\Phi_{(D; c)} : C \to (\PP^{r-1})^{p-1}, \,\,\,
(x \mapsto (\Phi_{|D_1|}(x), \dots, \Phi_{|D_{p-1}|}(x))) .$$
Set $P_j = (\Phi_{|D_1|}(c_j), \dots, \Phi_{|D_{p-1}|}(c_j))$.
Let
$$\pi_{(D; c)} : X = X_{(D; c)} \to (\PP^{r-1})^{p-1}$$ be the blowup at the $q+r$ points $P_j$
with $E_j = \pi_{(D; c)}^{-1}(P_j)$.
Similarly, we define $\Phi_{(D'; c')}$, $P_j'$, $\pi_{(D'; c')} : X' = X_{(D'; c')} \to (\PP^{r-1})^{p-1}$, $E_j'$.

For the result below, see \cite[Theorem 1]{Mu2}, \cite{D}. Our statement also incorporates the marking on the curve $C$
embedded in $(\PP^{r-1})^{p-1}$.

\begin{proposition}\label{Cal2}
Suppose that $w \in W$ and $(D; c) \in U_C$. Then there is a pseudo-isomorphism
$f_w : X \to X' = X_{(D', c')}$
such that $f_w^* = w$ if we identify
$$H^2(X, \Z) = \sum_{i=1}^{p-1} \Z h_i + \sum_{j=1}^{q+r} \Z e_j  = H^2(X', \Z)$$
by letting $[E_j] = e_j = [E_j']$ $(j \ge 1)$ and
$$[\pi_{(D; c)}^*\OO_{\PP_i^{r-1}}(1)] = h_i = [\pi_{(D'; c')}^*\OO_{\PP_i^{r-1}}(1)]$$
where $\PP_i^{r-1}$ is the $i$-th factor of the product $(\PP^{r-1})^{p-1}$.
\end{proposition}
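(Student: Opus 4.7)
The plan is to mirror the proof of Proposition \ref{Cal} and reduce to a generating set of the Weyl group $W$: the factor-swap reflections $r_{\beta_i}$ ($1 \le i \le p-2$), the point-permutation reflections $r_{\alpha_j} = r_{e_j-e_{j+1}}$ ($1 \le j \le q+r-1$), and the Cremona-type reflection $r_{\alpha_0}$ with $\alpha_0 = h_1 - \sum_{i=1}^r e_i$. Since pseudo-isomorphisms compose and the conclusion is multiplicative in $w$, producing the required $f_w$ for each generator suffices. The cohomological content --- that a pseudo-isomorphism with $f_w^* = w$ exists --- is the substance of \cite[Theorem~1]{Mu2}; what the present statement adds is the identification $X' = X_{w(D;c)}$ as a marked blowup, which is established by tracking the embedded curve $\Phi_{(D;c)}(C)$ and its marked points through each construction.

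For $r_{\beta_i}$ I would use the automorphism of $(\PP^{r-1})^{p-1}$ swapping the $i$-th and $(i+1)$-th factors; it lifts to an isomorphism of blowups realizing the $h_i \leftrightarrow h_{i+1}$ action on $H^2$, and the marking updates as $D'_i = D_{i+1}$, $D'_{i+1} = D_i$, $c' = c$, matching $r_{\beta_i}(D;c)$ by Definition \ref{wpair}. For $r_{\alpha_j}$ with $j \ge 1$, I would pick a product automorphism of $(\PP^{r-1})^{p-1}$ interchanging $P_j$ and $P_{j+1}$ (possible since all $P_k$ are distinct by Remark \ref{non-deg}); its lift to the blowups gives the desired isomorphism and swaps $c_j$ with $c_{j+1}$ in the marking, matching the $W$-action.

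The main obstacle is $r_{\alpha_0}$: the standard Cremona on only the first factor does not realize it, because the formula $r_{\alpha_0}(h_k) = h_k + (r-1)\alpha_0$ is non-trivial for each $k \ge 2$. Here I would invoke \cite[Theorem~1]{Mu2}, which produces a multi-projective Cremona transformation $\widetilde{\Psi}: X \dashrightarrow X'$ whose $k$-th projection to $\PP^{r-1}$ is defined by the linear system $|r_{\alpha_0}(h_k)|$ on $X$: for $k=1$ this is the classical Cremona system $|(r-1)h_1 - (r-2)\sum_{j=1}^r e_j|$, and for $k \ge 2$ the twisted system $|h_k + (r-1)h_1 - (r-1)\sum_{j=1}^r e_j|$. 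The non-degeneracy forced by $(D;c) \in U_C$ is precisely what makes the base loci of these systems have codimension $\ge 2$ on $X$, so that $\widetilde{\Psi}$ is a pseudo-isomorphism onto another blowup $X'$ of $(\PP^{r-1})^{p-1}$ at $q+r$ points.

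To finish the identification, I would analyse the proper transform $C_X \subset X$ of $\Phi_{(D;c)}(C)$. The condition $0 \notin \Delta_n \cdot v$ (equivalently $(D;c) \in U_C$) ensures that $C_X$ meets the indeterminacy locus of $\widetilde{\Psi}$ only transversally, so $\widetilde{\Psi}$ restricts to an isomorphism $C_X \to C_{X'}$. Restricting the linear systems $|r_{\alpha_0}(h_k)|$ to $C_X \simeq C$ and applying Lemma \ref{RR} identifies the new degree-$r$ divisors $D'_k$ on $C$ as the classes predicted by the $W$-action on $h_k$; the same restriction applied to the proper transforms of the exceptional divisors identifies the new marked points $c'_j$ via the embedding $\Phi_{(D';c')}$. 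Comparing with Definition \ref{wpair} gives $(D';c') = r_{\alpha_0}(D;c)$, completing the reduction and thereby the proof.
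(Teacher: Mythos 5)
Your proposal is correct and takes essentially the same route as the paper: reduction to the simple reflections, realization of the factor swaps and point swaps by automorphisms of $(\PP^{r-1})^{p-1}$, realization of $r_{\alpha_0}$ by Mukai's twisted multi-factor Cremona involution, and identification of $(D';c') = r_{\alpha_0}(D;c)$ by restricting the relevant divisor classes to the proper transform of $C$ and invoking Lemma \ref{RR} together with Definition \ref{wpair}. The only cosmetic difference is that you quote \cite[Theorem 1]{Mu2} and the linear systems $|r_{\alpha_0}(h_k)|$ as a black box, where the paper writes out Mukai's explicit coordinate formula for $\Psi$ (citing \cite[Lemma in \S 3]{Mu2}) and computes $f_w^*$ on the classes $h_k$ and $e_j$ directly.
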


\begin{proof}
The proof is similar to Proposition \ref{Cal}.
Since the Weyl group is generated by the reflections
$r_{h_i - h_j}$ (resp. $r_{e_i - e_j}$) corresponding to the exchange of the factors $\PP_i^{r-1}$ and $\PP_j^{r-1}$
(resp. $P_i$ and $P_j$ of the blowup),
and the Cremona involution $r_{\alpha_0}$ with $\alpha_0 = h_1 - \sum_{i=1}^r e_i$, we have only to
consider the case $w = r_{\alpha_0}$.
This $w$ is realized by the lifting
$$f_w : X \to X'$$
of the
following standard (geometric) Cremona involution (cf. \cite[Lemma in \S 3]{Mu2}):
$$\begin{aligned}
\Psi \, : \, & \, (\PP^{r-1})^{p-1} \, \to \, (\PP^{r-1})^{p-1}, \\
& \, ([X_1 : \dots : X_r], [Y_1 : \dots : Y_r], \dots, [Z_1 : \dots : Z_r]) \, \mapsto \, \\
& \, ([\frac{1}{X_1} : \dots : \frac{1}{X_r}],
[\frac{Y_1}{X_1} : \dots : \frac{Y_r}{X_r}], \dots, [\frac{Z_1}{X_1} : \dots : \frac{Z_r}{X_r}]) .
\end{aligned}$$
Here, with new coordinates, we may assume that $P_1, \dots, P_r$ are images of the
standard vertices $[1 : 0 : \dots : 0], \dots, [0 : \dots : 0 : 1]$ in $\PP^{r-1}$
via the diagonal embedding
$$\PP^{r-1} \, \to \, (\PP^{r-1})^{p-1}, \,\,\,\, P \, \mapsto \, (P, \dots, P)$$
and
$$X \to (\PP^{r-1})^{p-1}$$
is the blowup of $q+r$ points $P_i$
and
$$X' \to (\PP^{r-1})^{p-1}$$
is the blowup of $Q_1 := P_1, \dots, Q_r := P_r$ and $Q_j := \Psi(P_j)$ ($r < j \le q+r$).
By the form of the map,
$$f_w^* h_1 = (r-1) h_1 - (r-2) \sum_{i=1}^{r} [E_i] = w(h_1)$$
if we identify $H^2(X, \Z) = H^2(X', \Z)$ (here and below) by letting $h_i = h_i', [E_j] = e_j = [E_j']$.
Here and below  $E_i \subset X$ (resp. $E_i' \subset X'$) is the inverse of $P_i$ (resp. $Q_i$),
$h_i$ (resp. $h_i'$) is the (cohomology class of) total transform of the hyperplane
$\OO_{\PP_i^{r-1}}(1)$ of the $i$-th factor of the domain
(resp. codomain) of $\Psi$.
From the form of $\Psi$, we have also
$$f_w^* h_i = (r-1) h_1 - (r-2+1) \sum_{i=1}^{r} [E_i] + h_i = w(h_i) \,\, (1 \le i < p)$$
where the $h_i$'s in the middle of the display and the extra $1$ in $r-2+1$ correspond to the numerators $Y_1, \dots, Z_r$ in
the defining rational functions of $\Psi$.
As observed in \cite[Lemma in \S 3]{Mu2},
using the affine coordinates
$$((x_2, \dots, x_r), (y_2, \dots, y_r), \dots, (z_2, \dots, z_r))$$
of $(\PP^{r-1})^{p-1}$
around the point $P_1$ (the diagonal image of the point $[1 : 0 : \dots : 0] \in \PP^{r-1}$),
the map
$$X \overset{f_w}\to X' \to (\PP^{r-1})^{p-1}$$
takes the following form around $E_1$:
$$\begin{aligned}
E_1 \, \ni \, & \, ((x_2, \dots, x_r), (y_2, \dots, y_r), \dots, (z_2, \dots, z_r) \\
\, \mapsto \,
& \, ([0 : \frac{1}{x_2} : \dots : \frac{1}{x_r}],
[1 : \frac{y_2}{x_2} : \dots : \frac{y_r}{x_r}], \dots, [1 : \frac{z_2}{x_2} : \dots : \frac{z_r}{x_r}]) .
\end{aligned}$$
Hence for the hyperplane $H_{1i} \subset (\PP^{r-1})^{p-1}$ defined by $X_i = 0$, its
proper transform $H_{1i}' \subset X'$ satisfies
(when $i = 1$)
$f_w^*H_{1i}' = E_i$.
Since $\Psi$ is an involution and by a similar observation, for all $1 \le i \le r$, we have
(noting that $[H_{1i}] = h_1$):
$$[f_w^* E_i'] = [H_{1i}'] = h_1 - \sum_{i \ne j=1}^r [E_j] = w(e_i)$$
if we identify $H^2(X, \Z) = H^2(X', \Z)$ as above.
The equality $f_w^* e_j' = e_j$ ($r < j \le q+r$) is by the definition of $Q_j$.
Thus we have $f_w^* = w$ on $H^2(X', \Z)$ (identified with $H^2(X, \Z)$).

To check that $X' \to (\PP^{r-1})^{p-1}$ is just the blowup of points $P_i'$
determined by the $(n+1)$-tuple $w(D; c)$,
we can argue as in Proposition \ref{Cal}.
Indeed, let $C_X \subset X$ be the proper transform of $C = \Phi_{(D; c)}(C) \subset (\PP^{r-1})^{p-1}$
(which is isomorphic to $C$ since we blow up only smooth points on $C$).
Then for $1 \le i \le r$,
we have
$$\deg(H_{1i}' | C_X) = \deg(H_{1i} | \Phi_{(D; c)}(C)) - \deg(\sum_{i \ne j=1}^r E_j) | C_X = r - (r-1) = 1 .$$
Hence $C_X$ meets $H_{1i}'$ at only one point and transversally.
So the map $X \overset{f_w}\to X' \to (\PP^{r-1})^{p-1}$ collapses $H_{1i}'$ to a smooth point $Q_i$ on the image $C'$
of $C$ which is
contained in the codomain $(\PP^{r-1})^{p-1}$  of the Cremona involution $\Psi$.
With the identification $C' = C_X = \Phi_{(D; c)}(C) = C$, we have
$$[\OO_{\PP_i^{r-1}}(1) | C'] = ((r-1) h_1 - (r-1) \sum_{i=1}^{r} [E_i] + h_i) | C_X = w(h_i) | C = w(D)_i = D'_i$$
which is a degree $r \ge 3$ (very ample) divisor and embeds $C'$ onto $C_i'$ ($\subset \PP_i^{r-1}$,
the $i$-th factor of the codomain of $\Psi$).
With the identification $C' = C_X = \Phi_{(D; c)}(C) = C = C_1
:= \Phi_{|D_1|}(C)$ ($\subset \PP_1^{r-1}$,
the first factor of the domain of $\Psi$),
the point $Q_i \in C'$ is given by
$$[H_{1i}' | C_X] = [H_{1i} | C_1] - \sum_{i \ne j=1}^r E_j | C_X
= D_1 - \sum_{i \ne j=1}^r c_j = w(c_i) \in C .$$
Hence $Q_i$ is one of $P_i'$ ($1 \le i \le r$)
defined before Proposition \ref{Cal2}. By the construction, $Q_j = \Psi(P_j) = P_j'$ for $r < j \le q+r$.
Thus $X' = X_{(D'; c')}$.
This proves Proposition \ref{Cal2}.
\end{proof}

\begin{setup}
{\bf Proof of Theorems \ref{ThB} and \ref{ThA}}
\end{setup}

The same argument for $p = 2$ now works for all $p \ge 2$, but with Proposition \ref{Cal}
replaced by Proposition \ref{Cal2}.

\begin{setup}
{\bf On the geometry of $X_{3,q,3}$ with $q \ge 4$}
\end{setup}

\begin{proposition}
Let $X_{3,q,3}$ $(q \ge 4)$ and $f_w$ be as in Theorem $\ref{ThA}$.
Then $X_{3,q,3}$ is the blowup of $\PP^2 \times \PP^2$ at $q+3$ points, and $f_w$ permutes
members of the linear system $|{-}K_X/3|$ of dimension $\ge 2$.
When $q = 4$, every divisor $E$ with class $[E]$
fixed by $f_w^*$ satisfies $E \sim a(-K_X/3)$ $(${\it linear equivalence}$)$ for some $a \in \Z$.
\end{proposition}

The proof is similar to that of Proposition \ref{X254} and is left to the reader.

%
%
%

\end{document}